\theoremstyle{plain}
\newtheorem{thm}{Theorem}[section]
\newtheorem{prop}[thm]{Proposition}
\newtheorem{proposition}[thm]{Proposition}
\newtheorem{corollary}[thm]{Corollary}
\newtheorem*{thm*}{Theorem}
\newtheorem{abcthm}{Theorem}
\theoremstyle{definition}
\newtheorem{defn}[thm]{Definition}
\newtheorem*{defn*}{Definition}
\newtheorem*{exam*}{Example}
\newtheorem{remark}[thm]{Remark}
\newcommand{\tl}{\mathrm{TL}}
\renewcommand{\t}{\mathbbm{1}}
\newcommand{\C}{\mathbb C}
\newcommand{\F}{\mathcal F}
\newcommand{\Z}{\mathbb Z}
\newcommand{\frakS}{\mathfrak{S}}
\newcommand{\calC}{\mathcal{C}}
\newcommand{\calJ}{\mathcal J}
\newcommand{\ootimes}[1]{\otimes_{\tl_{#1}}}
\renewcommand{\H}{\mathcal{H}}
\DeclareMathOperator{\rk}{rank}
\DeclareMathOperator{\im}{im}
\begin{document}	
	
	\title[Injective words for Temperley-Lieb algebras]{Combinatorics of injective words for Temperley-Lieb algebras}
		\author{Rachael Boyd}
    \address{Max Planck Institute for Mathematics, Bonn}
    \email{rachaelboyd@mpim-bonn.mpg.de}
    \urladdr{https://guests.mpim-bonn.mpg.de/rachaelboyd/} 

    \author{Richard Hepworth}
    \address{Institute of Mathematics, University of Aberdeen}
    \email{r.hepworth@abdn.ac.uk}
    \urladdr{http://homepages.abdn.ac.uk/r.hepworth/pages/} 
    \subjclass[2010]{
    05E45 (primary), 
    05E15, 
    16E40 (secondary)
    }
    \keywords{Temperley-Lieb Algebras, Fine numbers, Jacobsthal numbers, chain complexes}

    \begin{abstract}
    This paper studies combinatorial properties of the \emph{complex of planar injective words}, a chain complex of modules over the Temperley-Lieb algebra that arose in our work on homological stability. 
    Despite being a linear rather than a discrete object, our chain complex nevertheless exhibits interesting combinatorial properties.
    We show that the Euler characteristic of this complex is the~$n$-th Fine number. We obtain an alternating sum formula for the representation given by its top-dimensional homology module and, under further restrictions on the ground ring, we decompose this module in terms of certain standard Young tableaux.
    This trio of results --- inspired by results of Reiner and Webb for the complex of injective words --- can be viewed as an interpretation of the~$n$-th Fine number as the `planar' or `Dyck path' analogue of the number of derangements of~$n$ letters.
    This interpretation has precursors in the literature, but here emerges naturally from considerations in homological stability.
    Our final result shows a surprising connection between the boundary maps of our complex and the Jacobsthal numbers.
    \end{abstract}

	\maketitle
	
\setcounter{tocdepth}{1}
\tableofcontents

\section{Introduction}

In this work we study combinatorial properties of a highly connected complex that arose in our study of the \emph{Temperley-Lieb algebra} in~\cite{BoydHepworthStability}.
Highly connected complexes arise naturally in many areas of mathematics. {(We will use the word \emph{complex} vaguely: we might mean simplicial complex, poset, chain complex, semi-simplicial set or space, and so on. All of these interpretations have corresponding homology groups, and by \emph{highly connected} we mean that these homology groups vanish except in the top degree.)}
In combinatorics {highly connected complexes occur }as matroid complexes and order complexes of geometric lattices~\cite{Bjorner}, as order complexes of Cohen-Macaulay posets~\cite{BjornerGarsiaStanley}, and in the theory of shellability in its various forms~\cite{Bjorner,BjornerWachs,Kozlov}, to name just a few.
For the authors, highly connected complexes {appear} in the theory of \emph{homological stability}. 
This subject is motivated by the study of homology and cohomology of groups and spaces, and makes extensive use of complexes such as buildings, split buildings, complexes of partial bases (of vector spaces, modules, and free groups), complexes of arcs in surfaces, and many more besides.
Though no standard introductory reference currently exists for homological stability,
we recommend Wahl's paper~\cite{WahlLectures}. 
The introduction of~\cite{RandalWilliamsWahl} may also give a good impression of the theory's scope.

The \emph{complex of injective words} is much studied in both combinatorics and topology.
Its high-connectivity has been proved using various methods, by authors including Farmer~\cite{Farmer}, Maazen~\cite{Maazen}, Bj\"orner-Wachs~\cite{BjornerWachs}, Kerz~\cite{Kerz}, and Randal-Williams~\cite{RandalWilliamsConfig}, and is an important ingredient in proofs of homological stability for the symmetric groups~\cite{Maazen,Kerz,RandalWilliamsConfig}.
Reiner and Webb~\cite{ReinerWebb} studied the complex of injective words from a combinatorial point of view. 
They showed that its Euler characteristic is the number of derangements of~$n$ letters, and they described its top-dimensional homology representation in two ways: as an alternating sum, and in terms {of standard Young tableaux.}
A further decomposition of the top-dimensional homology was given by Hanlon and Hersh in~\cite{HanlonHersh}.    
    
In our work on homological stability for Temperley-Lieb algebras~\cite{BoydHepworthStability}, we introduced and studied the \emph{complex of planar injective words}, a chain complex of modules over the Temperley-Lieb algebra on~$n$ strands,  closely analogous to the (chain complex of the) complex of injective words.
In particular we proved that the homology of our complex is concentrated in degree~$(n-1)$, as is the case for the complex of injective words.

In this paper we study the complex of planar injective words from a combinatorial viewpoint, inspired by the results of Reiner and Webb.
We will see that the role of the{ number of derangements }is now taken by the~$n$-th \emph{Fine number}.
We will also expose an unexpected appearance of the \emph{Jacobsthal numbers}.

\subsection{Temperley-Lieb algebras and planar injective words}\label{subsect:TL intro}
Let~$n\geq 0$, let~$R$ be a commutative ring, and let~$a\in R$.
The \emph{Temperley-Lieb algebra}~$\tl_n(a)$ is the~$R$-algebra with basis given by the planar diagrams on~$n$ strands, taken up to isotopy, and with multiplication given by pasting diagrams and replacing closed loops with factors of~$a$. 
The last sentence was intentionally brief, we hope that its meaning becomes clearer with the following illustration of two elements $x,y\in\tl_5(a)$ 
\[
    x=
    \begin{tikzpicture}[scale=0.4, baseline=(base)]
        \coordinate (base) at (0,2.75);
        \draw[line width = 1](0,0.5)--(0,5.5);
        \draw[line width = 1](6,0.5)--(6,5.5);
        \foreach \x in {1,2, 3,4,5}{
            \draw[fill=black] (0,\x) circle [radius=0.15] (6,\x) circle [radius=0.15];
        } 
        \draw (0,1) to[out=0,in=-90] (1,1.5) to[out=90,in=0] (0,2);
        \draw (0,4) to[out=0,in=-90] (1,4.5) to[out=90,in=0] (0,5);
        \draw (6,4) to[out=180,in=-90] (5,4.5) to[out=90,in=180] (6,5);
        \draw (6,2) to[out=180,in=-90] (5,2.5) to[out=90,in=180] (6,3);
        \draw (0,3) .. controls (2,3) and (4,1) .. (6,1);
    \end{tikzpicture}
    \qquad\qquad
    y=
    \begin{tikzpicture}[scale=0.4, baseline=(base)]
        \coordinate (base) at (0,2.75);
        \draw[line width = 1](0,0.5)--(0,5.5);
        \draw[line width = 1](6,0.5)--(6,5.5);
        \foreach \x in {1,2, 3,4,5}{
            \draw[fill=black] (0,\x) circle [radius=0.15] (6,\x) circle [radius=0.15];
        } 
        \draw (0,2) to[out=0,in=-90] (1,2.5) to[out=90,in=0] (0,3);
        \draw (6,4) to[out=180,in=-90] (5,4.5) to[out=90,in=180] (6,5);
        \draw (6,2) to[out=180,in=-90] (5,2.5) to[out=90,in=180] (6,3);
        \draw (0,1) to[out=0,in=-90] (2,2.5) to[out=90,in=0] (0,4);
        \draw (0,5) .. controls (3,5) and (3,1) .. (6,1);
    \end{tikzpicture}
\]
and their product~$x\cdot y$.
\[
    x\cdot y=
    \begin{tikzpicture}[scale=0.4, baseline=(base)]
        \coordinate (base) at (0,2.75);
        \draw[line width = 1](0,0.5)--(0,5.5);
        \draw[line width = 1](6,0.5)--(6,5.5);
        \draw[line width = 1](12,0.5)--(12,5.5);
        \foreach \x in {1,2, 3,4,5}{
            \draw[fill=black] (0,\x) circle [radius=0.15] (6,\x) circle [radius=0.15] (12,\x) circle [radius=0.15];
        } 
        \draw (0,1) to[out=0,in=-90] (1,1.5) to[out=90,in=0] (0,2);
        \draw (0,4) to[out=0,in=-90] (1,4.5) to[out=90,in=0] (0,5);
        \draw (6,4) to[out=180,in=-90] (5,4.5) to[out=90,in=180] (6,5);
        \draw (6,2) to[out=180,in=-90] (5,2.5) to[out=90,in=180] (6,3);
        \draw (0,3) .. controls (2,3) and (4,1) .. (6,1);
        \draw (6,2) to[out=0,in=-90] (7,2.5) to[out=90,in=0] (6,3);
        \draw (12,4) to[out=180,in=-90] (11,4.5) to[out=90,in=180] (12,5);
        \draw (12,2) to[out=180,in=-90] (11,2.5) to[out=90,in=180] (12,3);
        \draw (6,1) to[out=0,in=-90] (8,2.5) to[out=90,in=0] (6,4);
        \draw (6,5) .. controls (9,5) and (9,1) .. (12,1);
    \end{tikzpicture}
    \ \ 
    =
    \ \ 
    \begin{tikzpicture}[scale=0.4, baseline=(base)]
        \coordinate (base) at (0,2.75);
        \draw[line width = 1](0,0.5)--(0,5.5);
        \draw[line width = 1](6,0.5)--(6,5.5);
        \foreach \x in {1,2, 3,4,5}{
            \draw[fill=black] (0,\x) circle [radius=0.15] (6,\x) circle [radius=0.15];
        } 
        \draw (0,1) to[out=0,in=-90] (1,1.5) to[out=90,in=0] (0,2);
        \draw (0,4) to[out=0,in=-90] (1,4.5) to[out=90,in=0] (0,5);
        \draw (0,3) .. controls (3,3) and (3,1) .. (6,1);
        \draw (6,4) to[out=180,in=-90] (5,4.5) to[out=90,in=180] (6,5);
        \draw (6,2) to[out=180,in=-90] (5,2.5) to[out=90,in=180] (6,3);
        \draw (3,3.5) circle (0.7);
    \end{tikzpicture}
    \ \ 
    =
    \ \ 
    a\cdot
    \begin{tikzpicture}[scale=0.4, baseline=(base)]
        \coordinate (base) at (0,2.75);
        \draw[line width = 1](0,0.5)--(0,5.5);
        \draw[line width = 1](6,0.5)--(6,5.5);
        \foreach \x in {1,2, 3,4,5}{
            \draw[fill=black] (0,\x) circle [radius=0.15] (6,\x) circle [radius=0.15];
        } 
        \draw (0,1) to[out=0,in=-90] (1,1.5) to[out=90,in=0] (0,2);
        \draw (0,4) to[out=0,in=-90] (1,4.5) to[out=90,in=0] (0,5);
        \draw (0,3) .. controls (3,3) and (3,1) .. (6,1);
        \draw (6,4) to[out=180,in=-90] (5,4.5) to[out=90,in=180] (6,5);
        \draw (6,2) to[out=180,in=-90] (5,2.5) to[out=90,in=180] (6,3);
    \end{tikzpicture}
\]
The Temperley-Lieb algebras arose in theoretical physics in the 1970s {in work of Temperley and Lieb~\cite{TemperleyLieb}}.  
They were later rediscovered by Jones in his work on von Neumann algebras \cite{JonesIndex}, and used in the first definition of the Jones polynomial \cite{JonesBull}.
Kauffman gave the diagrammatic interpretation of the algebras in \cite{KauffmanState} and \cite{KauffmanInvariant}.
The rank of~$\tl_n(a)$ as an~$R$-module is the~$n$-th Catalan number~$C_n$~\cite{JonesAnnals}.

Now let~$a=v+v^{-1}$ where~$v\in R^\times$ is a unit (the most commonly studied case in the literature).
The \emph{complex of planar injective words}~$W(n)$ is a chain complex of~$\tl_n(a)$-modules. In degree~$i$ it is given by the tensor product module~$\tl_n(a)\otimes_{\tl_{n-i-1}(a)}\t$,
where~$\t$ is the trivial module for~$\tl_{n-i-1}(a)$.
In the original complex of injective words the~$i$-simplices are words~$(x_0,\ldots,x_i)$ on the alphabet~$\{1,\ldots,n\}$ with no repeated entries. 
The action of $\frakS_n$ on these simplices is transitive, and the typical stabiliser is~$\frakS_{n-i-1}$, so that the~$i$-th chain group is isomorphic to~$R\frakS_n\otimes_{R\frakS_{n-i-1}}\t$.
Thus~$W(n)$ is an analogue of (the chain complex of) the complex of injective words, in which the role of~$\frakS_n$ is now played by~$\tl_n(a)$.
In~\cite{BoydHepworthStability} we showed that~$H_d(W(n))=0$ for~$d\leq n-2$, and since the complex is concentrated in degrees from~$-1$ to~$n-1$, it follows that its only homology group is~$H_{n-1}(W(n))$.    
The restriction to the case~$a=v+v^{-1}$ is necessary for~$\tl_n(a)$ to receive a homomorphism from the group algebra of the braid group, which is required in order to define the differentials of~$W(n)$.

\subsection{Results}

The~$n$-th \emph{Fine number}~$F_n$ \cite{FineOEIS} is the number of Dyck paths of length~$2n$ whose first peak has even height. 
This is the second of 11 descriptions of the Fine numbers given by Deutsch and Shapiro in their survey~\cite{DeutschShapiro}.
Deutsch and Shapiro also state the following alternating sum formula for~$F_n$:
\begin{equation}\label{equation-alternating}
    F_n=\frac{1}{n+1}\bigg[ \binom{2n}{n} - 2\binom{2n-1}{n} +3 \binom{2n-2}{n}- \cdots+(-1)^n(n+1)\binom{n}{n} \bigg]
\end{equation}
(See~\cite[Section~4]{DeutschShapiro} and also \cite{Deutsch,Moon, Robertson}.) 
We show that this alternating sum has a very simple interpretation: its~$m$-th term counts the Dyck paths whose first peak has height at least~$m$. {It is surprising that, to our knowledge, this result has not appeared in the literature so far.}

Remarkably, the complex of planar injective words $W(n)$ embodies a representation theoretical `lifting' of the Fine numbers and of this alternating sum formula.

\begin{abcthm}\label{theorem-euler}
    Let $R$ be a commutative ring, let $v\in R^\times$, and let $a = v+v^{-1}$.
    Then the Euler characteristic of~$W(n)$ is the~$n$-th Fine number, up to sign:
    \[
        \chi(W(n)) = (-1)^{n-1}F_n.
    \]
\end{abcthm}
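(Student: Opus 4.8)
The plan is to read the Euler characteristic off from the chain groups, so the first task is to show that each chain group $W(n)_i=\tl_n(a)\otimes_{\tl_{n-i-1}(a)}\t$ is a free $R$-module and to compute its rank. Write $m=n-i-1$ and view $\tl_m(a)\subseteq\tl_n(a)$ as the subalgebra generated by $U_1,\dots,U_{m-1}$. The trivial module is the cyclic left $\tl_m(a)$-module $\t=\tl_m(a)\big/\sum_{j<m}\tl_m(a)U_j$, so $W(n)_i\cong\tl_n(a)\big/\sum_{j<m}\tl_n(a)U_j$. Using the planar diagram basis of $\tl_n(a)$: for each $j$ the left ideal $\tl_n(a)U_j$ is spanned by those basis diagrams whose $j$-th and $(j{+}1)$-st right-hand endpoints are joined by an arc, and, by planarity, a diagram has an arc joining two of its first $m$ right-hand endpoints if and only if it has such an arc joining a \emph{consecutive} pair among them. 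Hence $\sum_{j<m}\tl_n(a)U_j$ is the $R$-span of a set of basis diagrams, this set is an $R$-module direct summand, and the quotient $W(n)_i$ is free with
\[
  \rk_R W(n)_i=P(n,m),\qquad P(n,m):=\#\bigl\{\text{planar diagrams on }n\text{ strands with no arc among their first }m\text{ right-hand endpoints}\bigr\}.
\]
Plugging this into $\chi(W(n))=\sum_{i=-1}^{n-1}(-1)^i\rk_R W(n)_i$ and reindexing by $m=n-i-1$ gives $\chi(W(n))=(-1)^{n-1}\sum_{m=0}^n(-1)^m P(n,m)$, so it remains to show $\sum_{m=0}^n(-1)^m P(n,m)=F_n$.

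The heart of the proof is a reinterpretation of $P(n,m)$ via Dyck paths. I would use the standard bijection between planar diagrams on $n$ strands and Dyck paths of length $2n$, normalised so that one reads the $2n$ boundary endpoints cyclically with the $n$ right-hand endpoints coming first in the order $1,2,\dots,n$, recording a step $U$ whenever an endpoint opens its arc and a step $D$ whenever it closes. Then the $j$-th step ($1\le j\le n$) is $U$ exactly when the $j$-th right-hand endpoint is not joined to an earlier right-hand endpoint; consequently the first $m$ steps are all $U$ precisely when the diagram has no arc among its first $m$ right-hand endpoints. As ``the first $m$ steps are all $U$'' is exactly the condition that the first peak of the Dyck path has height at least $m$, this shows
\[
  P(n,m)=\#\bigl\{\text{Dyck paths of length }2n\text{ whose first peak has height}\ge m\bigr\}
\]
for $m\ge1$, with $P(n,0)=C_n$ consistent with the convention that the first peak of a nonempty Dyck path has height $\ge1$. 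In passing this identifies $P(n,m)=\binom{2n-m}{n}-\binom{2n-m}{n+1}=\tfrac{m+1}{n+1}\binom{2n-m}{n}$, the modulus of the $m$-th summand of~\eqref{equation-alternating}, which is the simple interpretation advertised in the introduction.

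Finally I would run the sign cancellation. Writing $p_h$ for the number of Dyck paths of length $2n$ whose first peak has height exactly $h$, we have $P(n,m)=\sum_{h\ge m}p_h$ for $m\ge1$ and $P(n,0)=\sum_{h\ge1}p_h$, so
\[
  \sum_{m=0}^n(-1)^m P(n,m)=\sum_{h\ge1}p_h+\sum_{h\ge1}p_h\sum_{m=1}^h(-1)^m=\sum_{h\ge1}p_h-\sum_{h\text{ odd}}p_h=\sum_{h\text{ even}}p_h,
\]
since $\sum_{m=1}^h(-1)^m$ equals $0$ for even $h$ and $-1$ for odd $h$. The right-hand side is the number of Dyck paths of length $2n$ whose first peak has even height, which is precisely the $n$-th Fine number $F_n$; together with the formula for $\chi(W(n))$ above this yields $\chi(W(n))=(-1)^{n-1}F_n$.

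The step I expect to be the main obstacle is the first one: one must be careful with conventions (how $\tl_{n-i-1}(a)$ embeds in $\tl_n(a)$, and which endpoints are ``the first $m$''), and one must check that $\sum_{j<m}\tl_n(a)U_j$ is spanned by basis diagrams and is an $R$-module direct summand, so that the rank of the quotient is literally $P(n,m)$. This is where the planar structure of the Temperley--Lieb algebra does the work; once it is in place, the Dyck-path reinterpretation and the alternating-sum cancellation are elementary.
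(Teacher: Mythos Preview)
Your proposal is correct and follows essentially the same route as the paper: identify $\tl_n\otimes_{\tl_m}\t$ with the span of diagrams having no cup among the first $m$ right-hand dots (the paper packages this as Proposition~\ref{proposition-black-box}), translate to Dyck paths with first peak of height~$\geq m$ (Proposition~\ref{lemma - rank tensor as number of D paths}), and then cancel the alternating sum of these counts down to the even-height peaks. The only cosmetic difference is that the paper pairs consecutive terms $(B_0-B_1)+(B_2-B_3)+\cdots$ whereas you swap the order of summation over $m$ and over peak-height~$h$; the content is identical.
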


The~$\tl_n(a)$-module~$H_{n-1}(W(n))$ therefore has rank equal to the Fine number~$F_n$.
We call it the \emph{Fineberg module}, and we denote it by~$\F_n(a)$.
(Such top-dimensional homology groups are often called Steinberg modules, after the top-dimensional homology of the Tits building of a vector space.) 
As a consequence of Theorem~\ref{theorem-euler} we obtain the following representation-theoretic lifting of~\eqref{equation-alternating} in terms of induced modules. 
{Recall that the $G$-theory $G_0(A)$ of a Noetherian ring $A$ is the Grothendieck group of the abelian category of finitely generated $A$-modules.  If our ground ring $R$ is Noetherian, then the Temperley-Lieb algebra $\tl_n(a)$ is also Noetherian, so that we may consider $G_0(\tl_n(a))$.} 
\begin{abcthm}\label{corollary-alternating}
    {Suppose that $R$ is a commutative Noetherian ring, that $v\in R^\times$, and that $a=v+v^{-1}$.}  Then the alternating sum formula
    \[
        [\F_n(a)] 
        = 
        \sum_{m=0}^{n}(-1)^m\left[\t\uparrow_{\tl_m(a)}^{\tl_n(a)}\right]
    \]
   holds in $G_0(\tl_n(a))$.
   {(If~$\tl_n(a)$ is semisimple, then $G_0(\tl_n(a))$ is the {group} of virtual representations of~$\tl_n(a)$.)}
\end{abcthm}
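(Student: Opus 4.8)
The plan is to read the identity straight off the chain complex $W(n)$, using the standard principle that, in the Grothendieck group of a Noetherian ring, the class of a bounded complex of finitely generated modules equals the alternating sum of the classes of its homology modules. The Noetherian hypothesis on $R$ is exactly what makes this principle applicable here: since $\tl_n(a)$ is free of rank $C_n$ as an $R$-module it is a Noetherian ring, and each chain module $W(n)_i=\tl_n(a)\otimes_{\tl_{n-i-1}(a)}\t$ is a cyclic $\tl_n(a)$-module (it is a quotient of $\tl_n(a)$, because $\t$ is cyclic over $\tl_{n-i-1}(a)$), hence finitely generated. Thus $W(n)$ is a bounded complex of finitely generated $\tl_n(a)$-modules, concentrated in degrees $-1\le i\le n-1$.

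First I would establish the Grothendieck-group identity $\sum_i(-1)^i[W(n)_i]=\sum_i(-1)^i[H_i(W(n))]$ in $G_0(\tl_n(a))$. Because $\tl_n(a)$ is Noetherian, the cycle, boundary and homology modules in each degree are again finitely generated, and the short exact sequences
\[
0\to Z_i\to W(n)_i\to B_{i-1}\to 0,\qquad 0\to B_i\to Z_i\to H_i(W(n))\to 0
\]
telescope over $i$ in the usual way to yield this identity. Next I would feed in the vanishing theorem of \cite{BoydHepworthStability}, namely $H_i(W(n))=0$ for $i\le n-2$, so that the right-hand side collapses to $(-1)^{n-1}[H_{n-1}(W(n))]=(-1)^{n-1}[\F_n(a)]$. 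Finally I would rewrite the left-hand side via the substitution $m=n-i-1$: as $i$ runs from $-1$ to $n-1$, $m$ runs from $n$ down to $0$, the module $W(n)_i$ becomes $\t\uparrow_{\tl_m(a)}^{\tl_n(a)}$, and $(-1)^i=(-1)^{n-1}(-1)^m$, so the sum equals $(-1)^{n-1}\sum_{m=0}^n(-1)^m[\t\uparrow_{\tl_m(a)}^{\tl_n(a)}]$. Equating the two expressions and cancelling the common factor $(-1)^{n-1}$ gives the stated formula. The closing parenthetical is immediate: when $\tl_n(a)$ is semisimple, $G_0(\tl_n(a))$ is by definition the group of virtual representations.

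I do not expect a deep obstacle: the corollary is formal once one has the complex $W(n)$ together with the concentration of its homology in the top degree, both already in hand. The step needing the most care is really a bookkeeping one — checking the finite-generation conditions that make the Grothendieck-group computation legitimate (which is precisely what the Noetherian hypothesis on $R$ provides), and keeping track of indices and signs, in particular remembering that the degree $-1$ term contributes $[\t\uparrow_{\tl_n(a)}^{\tl_n(a)}]=[\t]$, the class of the trivial module. One could also remark that this $G_0$-identity is a module-theoretic refinement of Theorem~\ref{theorem-euler}, which it recovers on passing to $R$-ranks over a suitable ground ring.
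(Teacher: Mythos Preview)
Your proposal is correct and follows essentially the same route as the paper: invoke the high-connectivity of $W(n)$ to collapse the homology side, apply the Euler-characteristic identity in $G_0$ (the paper packages this as Proposition~\ref{prop-G0}, while you spell out the telescoping short exact sequences), and reindex via $m=n-i-1$. Your justification that $\tl_n(a)$ is Noetherian, via its being a finite free $R$-module, is in fact cleaner than the paper's appeal to finite generation as an $R$-algebra.
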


We now consider the case~$R=\C$, so that~$a=v+v^{-1}$ with~$v\in\C^\times$.
Then~$\tl_n(a)$ is semisimple unless~$q=v^2$ is an~$\ell$-th root of unity for~$2\leq \ell\leq n$. 
In the case of semisimplicity the irreducible representations~$V_\lambda$ of~$\tl_n(a)$ are indexed by partitions~$\lambda\vdash n$ with at most two columns.
We prove the following description of~$\F_n(a)$ in terms of counts of standard Young tableaux (SYT).

\begin{abcthm}\label{theorem-SYT}
    Let $R=\C$, let~$v\in\C^\times$ be such that~$v^2$ is not an~$\ell$-th root of unity for~$2\leq\ell\leq n$, and let $a=v+v^{-1}$.
    Then
    \[
        \F_n(a)\cong\bigoplus_{\substack{\lambda\vdash n \\ \leq 2\text{ columns}}}|
        \{\text{SYT }Q\text{ of shape }\lambda
        \text{ with top entry of second column odd}\}|\cdot V_\lambda.
    \]
    (In the case~$\lambda=1^n$, the unique SYT of shape~$\lambda$ has no second column, and so we declare that the top entry of its second column is~$(n+1)$.)
\end{abcthm}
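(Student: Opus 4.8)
The plan is to deduce Theorem~\ref{theorem-SYT} from the alternating sum formula of Corollary~\ref{corollary-alternating} by working in the semisimple setting and carrying out the character/multiplicity computation explicitly. Since $R=\C$ and $v^2$ is not an $\ell$-th root of unity for $2\leq\ell\leq n$, the algebra $\tl_n(a)$ is semisimple, so $G_0(\tl_n(a))$ is the free abelian group on the classes $[V_\lambda]$ for $\lambda\vdash n$ with at most two columns, and $[\F_n(a)]$ determines $\F_n(a)$ up to isomorphism. Thus it suffices to compute, for each such $\lambda$, the multiplicity
\[
    m_\lambda \;=\; \sum_{m=0}^{n}(-1)^m\,\big\langle\, \t\uparrow_{\tl_m(a)}^{\tl_n(a)}\,:\,V_\lambda\,\big\rangle
\]
and to identify it with the number of SYT of shape $\lambda$ whose second column has odd top entry (with the convention for $\lambda=1^n$).

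First I would record the branching rule for Temperley-Lieb algebras in the semisimple regime: the restriction of $V_\lambda$ from $\tl_n(a)$ to $\tl_{n-1}(a)$ is the sum of $V_\mu$ over those two-column partitions $\mu\vdash n-1$ obtained from $\lambda$ by removing a single box from the end of a column. Iterating, $\t\uparrow_{\tl_m(a)}^{\tl_n(a)}$ has, by Frobenius reciprocity, multiplicity of $V_\lambda$ equal to the number of paths in the two-column Young lattice from $\emptyset$ up through all two-column shapes $\lambda^{(m)}=\emptyset,\lambda^{(m+1)},\dots,\lambda^{(n)}=\lambda$ of sizes $m,m+1,\dots,n$, where at the first step we must land on the trivial partition of $\tl_m(a)$, i.e. on $\lambda^{(m)}=\emptyset$ if $m=0$ — more precisely, $\t$ for $\tl_m(a)$ is $V_{(1^m)}$ for $m\geq 2$ but one should treat small $m$ by hand and track exactly which $\tl_m$-module is called trivial. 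The key combinatorial object is therefore: a two-column standard tableau-like path ending at $\lambda$ together with a choice of where it ``starts'' (the value $m$), weighted by $(-1)^m$. Summing over $m$, each genuine SYT $Q$ of shape $\lambda$ (a maximal chain from $\emptyset$ to $\lambda$ of length $n$) contributes to many terms: truncating $Q$ after the box labelled $m$ and restarting, so the total signed count telescopes.

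The heart of the argument is this telescoping/sign-cancellation. Fix $\lambda\vdash n$ with at most two columns and fix an SYT $Q$ of shape $\lambda$. For each $0\leq m\leq n$ the prefix of $Q$ consisting of entries $1,\dots,m$ is a two-column shape, and the number-of-paths interpretation lets us reorganize $\sum_m (-1)^m\langle\t\uparrow:V_\lambda\rangle$ as a sum over all SYT $Q$ of shape $\lambda$ of an alternating sum over the possible ``cut points'' compatible with the trivial-module condition. Because the trivial module of $\tl_m(a)$ corresponds to the single-column shape $(1^m)$, the cut at level $m$ is allowed precisely when the prefix of $Q$ on $\{1,\dots,m\}$ is the column $(1^m)$, i.e. when entries $1,\dots,m$ all lie in the first column of $Q$. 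Hence the inner alternating sum for a fixed $Q$ runs over those $m$ for which $\{1,\dots,m\}$ occupies the first column, which are $m=0,1,\dots,k$ where $k$ is one less than the top entry of the second column of $Q$ (and $k=n$ when there is no second column, matching the stated convention $n+1$). The signed sum $\sum_{m=0}^{k}(-1)^m$ equals $1$ if $k$ is even and $0$ if $k$ is odd; that is, it is $1$ exactly when the top entry of the second column, namely $k+1$, is odd. Summing over all SYT $Q$ of shape $\lambda$ then gives $m_\lambda=\#\{Q:\text{top of second column of }Q\text{ is odd}\}$, which is the claim.

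The main obstacle I anticipate is making the reorganization in the previous paragraph rigorous and bookkeeping-clean: one must justify that $\langle \t\uparrow_{\tl_m}^{\tl_n}:V_\lambda\rangle$ really equals the number of two-column lattice paths from $(1^m)$ to $\lambda$ (handling the degenerate small cases $m=0,1$ and the fact that $\t$ for $\tl_0$ and $\tl_1$ is one-dimensional and should be thought of as the empty/one-box column), and that the branching rule above is exactly the edge relation of the truncated Young lattice. A secondary subtlety is that a given $Q$ may have its first column exhausted before its second column even starts only in the column case $\lambda=(1^n)$, so the convention in the statement is precisely what makes the uniform formula $\sum_{m=0}^{k}(-1)^m$ correct there. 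Once the branching rule is in hand (it is standard, e.g. via cellular/Jones basis theory or the identification of $\tl_n(a)$ with a quotient of the Hecke algebra), the rest is a finite, transparent sign computation, so I would present the branching rule and the path interpretation carefully and then let the telescoping do the work.
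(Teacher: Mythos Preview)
Your proposal is correct and follows essentially the same route as the paper: both start from the alternating sum $[\F_n]=\sum_m(-1)^m[\t\uparrow_{\tl_m}^{\tl_n}]$, identify the multiplicity of $V_\lambda$ in $\t\uparrow_{\tl_m}^{\tl_n}$ (via the branching rule) as the number $N_{\lambda,m}$ of SYT of shape $\lambda$ whose entries $1,\ldots,m$ fill the first column, and then evaluate $\sum_m(-1)^m N_{\lambda,m}$. The only cosmetic difference is that the paper telescopes this sum as $\sum_{i\ \text{odd}}(N_{\lambda,i-1}-N_{\lambda,i})$ and interprets each difference directly, whereas you swap the order of summation and compute $\sum_Q\sum_{m=0}^{t(Q)-1}(-1)^m$ tableau by tableau; these are the same calculation.
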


The three results listed above are the direct analogues of Reiner and Webb's results relating the complex of injective words to the number of derangements of~$n$ letters~\cite[Propositions~2.1--2.3]{ReinerWebb}.
This suggests an interpretation of the~$n$-th Fine number as the number of `planar derangements' of~$n$ letters.
This interpretation has several precursors in the literature:
One precursor is the fact that the~$n$-th Fine number is equal to the number of Dyck paths of length~$2n$ whose first peak has even height, while D\'esarm\'enien~\cite{Desarmenien} showed that the number of derangements of~$n$ is equal to the number of permutations whose first ascent~$\pi(i)<\pi(i+1)$ occurs for~$i$ even.
Another precursor is that Dyck paths can be interpreted as permutations that avoid the pattern 321~\cite[p.224]{StanleyEnumerativeTwo}, and the Fine number is the number of derangements that avoid 321~\cite[Section~8]{DeutschShapiro}.
It is striking that the same interpretation has arisen naturally through our work on homological stability and injective words.

We now turn to a feature of planar injective words that does not have a precursor in the case of injective words.
The~$n$-th \emph{Jacobsthal number}~$J_n$ \cite{JacobsthalOEIS} is the number of compositions of~$n$ that end with an odd number.
It is equal to the number of sequences~$n>a_1>a_2>\cdots>a_r>0$ whose initial term has the opposite parity to~$n$. 
The~$l^{th}$ \emph{Jacobsthal element} in~$\tl_n(a)$ is defined to be 
\[
    \calJ_l^n
    =
    \sum_{
        \substack{l>a_1>\cdots>a_r>0\\ l-a_1\text{ odd}}
    }
    (-1)^{(r-1)+l} 
    \left(
        \frac{\mu}{\lambda}
    \right)^r
    U_{a_1+n-l}\cdots U_{a_r+n-l}.
\]
(Here~$\lambda$ and~$\mu$ are constants involved in the definition of~$W(n)$ and ~$\frac{\mu}{\lambda}$ is equal to either~$-v$ or~$-v^{-1}$. {The~$U_i$ are the standard generators of $\tl_n(a)$ that we recall in Section~\ref{section-temperley-lieb} below.})
Observe that the number of irreducible terms in~$\calJ_l^n$ is~$J_{l}$. We prove the following identification of the boundary maps of~$W(n)$. 

\begin{abcthm}\label{theorem-jacobsthal}
    Under the assumptions of Theorem~\ref{theorem-euler}, for~$0\leq i\leq n-1$ the boundary map $d^i\colon W(n)_i\rightarrow W(n)_{i-1}$ acts as right multiplication by~$\calJ^n_{i+1}$ in the following sense:
    \[
        d^i(x\otimes r)=x\cdot \calJ^n_{i+1}\otimes r.
    \]
    In particular, the image of the boundary map has number of irreducible terms given by~$J_{i+1}$. 
\end{abcthm}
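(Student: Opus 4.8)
The plan is to unwind the construction of the differential of $W(n)$ from \cite{BoydHepworthStability} and then match the resulting element of $\tl_n(a)$ against the definition of $\calJ^n_{i+1}$ term by term. Recall from \cite{BoydHepworthStability} that $d^i$ is the alternating sum $d^i=\sum_{j=0}^{i}(-1)^j d^i_j$ of face maps, and that $d^i_j\colon W(n)_i\to W(n)_{i-1}$ is right multiplication by (the image in $\tl_n(a)$ of) a braid lifting the permutation $c_j=s_{n-i-1+j}\,s_{n-i-2+j}\cdots s_{n-i}$ (empty when $j=0$) that deletes the $j$-th letter of a planar injective word, composed with the canonical surjection $\tl_n(a)\otimes_{\tl_{n-i-1}(a)}\t\twoheadrightarrow\tl_n(a)\otimes_{\tl_{n-i}(a)}\t$. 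Denoting the image of that braid by $\bar b_j\in\tl_n(a)$, this gives $d^i(x\otimes r)=x\cdot\bigl(\sum_{j=0}^{i}(-1)^j\bar b_j\bigr)\otimes r$; so it suffices to identify $\sum_{j=0}^{i}(-1)^j\bar b_j$ with $\calJ^n_{i+1}$ (or, at worst, to show the two elements act the same way on $W(n)_{i-1}$).

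Next I would expand each $\bar b_j$. Under the braid-group homomorphism each generator $\sigma_k$ has image of the form $\lambda\cdot 1+\mu\,U_k$, where $\lambda,\mu$ are the constants fixed in the construction of $W(n)$ with $\mu/\lambda\in\{-v,-v^{-1}\}$; in the normalisation used for the differential (where the central scalar $\lambda$ is divided out) the relevant factors take the form $1+(\mu/\lambda)U_k$. Since $\bar b_j$ is a product of $j$ such factors, indexed by $n-i-1+j,\,n-i-2+j,\,\dots,\,n-i$ in this descending order, expanding gives
\[
\bar b_j=\sum_{S\subseteq\{n-i,\,\dots,\,n-i-1+j\}}(\mu/\lambda)^{|S|}\,U_S,
\]
where $U_S$ is the product of the $U_k$, $k\in S$, in decreasing order of index and $U_\emptyset=1$. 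Note that every index occurring is $\geq n-i$, so each such $U_k$ commutes with $\tl_{n-i-1}(a)$ — which is what makes $x\otimes r\mapsto x\,\calJ^n_{i+1}\otimes r$ well defined on $W(n)_i$ — and that the surjection onto $W(n)_{i-1}$ kills none of these monomials, since that would require an index $\leq n-i-1$.

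The crux is then to collect the coefficient of a fixed monomial $U_S$ with $S=\{s_1>s_2>\cdots>s_r\}\subseteq\{n-i,\dots,n-1\}$ across all faces. This monomial occurs in $\bar b_j$ exactly when $S\subseteq\{n-i,\dots,n-i-1+j\}$, i.e.\ for $s_1-n+i+1\leq j\leq i$, so its coefficient in $\sum_{j=0}^i(-1)^j\bar b_j$ is the telescoping alternating sum $(\mu/\lambda)^{r}\sum_{j=s_1-n+i+1}^{i}(-1)^j$. This vanishes unless $i-(s_1-n+i+1)=n-s_1-1$ is even — equivalently, unless $n-s_1$ is odd — and otherwise equals $(-1)^{(r-1)+(i+1)}(\mu/\lambda)^r$. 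Re-indexing by $a_t=s_t-(n-i-1)$, the surviving monomials are precisely $U_{a_1+n-i-1}\cdots U_{a_r+n-i-1}$ for $i+1>a_1>\cdots>a_r>0$ with $i+1-a_1$ odd, each with the coefficient prescribed in the definition of $\calJ^n_{i+1}$; moreover the degenerate monomial $U_\emptyset=1$ survives (with coefficient $1$) exactly when $i+1$ is odd, matching the composition $(i+1)$. Hence $\sum_{j=0}^i(-1)^j\bar b_j=\calJ^n_{i+1}$ and $d^i(x\otimes r)=x\cdot\calJ^n_{i+1}\otimes r$. The last assertion is immediate: the index set of $\calJ^n_{i+1}$ is in bijection, via partial sums, with the compositions of $i+1$ ending in an odd part, of which there are $J_{i+1}$.

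I expect the main obstacle to be bookkeeping rather than ideas. One must identify precisely which braid (together with which normalising central scalar) lifts $c_j$ in \cite{BoydHepworthStability}, so that each factor genuinely has constant term $1$ and the alternating sums above telescope cleanly; and one must then verify that the residual sign is exactly $(-1)^{(r-1)+(i+1)}$ and the residual scalar exactly $(\mu/\lambda)^r$ — in particular that the signs $(-1)^{|S|}$ coming from the generator images combine correctly with the $(-1)^i$ produced by the telescoping. A minor additional point is to confirm that $c_j$ is the simple cycle displayed above rather than a variant with extra crossings; were it the latter, one would further discard the extra low-index monomials using the fact that $U_k$ acts as $0$ on the trivial module $\t$ for $k\leq n-i-1$.
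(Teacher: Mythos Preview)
Your approach is correct and genuinely different from the paper's.  The paper splits on the parity of~$i$: for $i$ odd it pairs the summands $j=2k$ and $j=2k+1$, observes that the constant part of the leading factor in the $(2k+1)$-st summand cancels the $2k$-th summand outright, and only then expands the remaining product of brackets; for $i$ even it strips off the $j=0$ term first and repeats.  You instead expand each $\bar b_j$ completely as a sum over subsets $S$, collect the contribution of a fixed monomial $U_S$ across all~$j$, and telescope the alternating sum $\sum_{j\ge s_1-n+i+1}^{i}(-1)^j$.  Your route is shorter and avoids the parity case split entirely; the paper's route makes the ``forced leading $U$'' structure visible one step earlier but at the cost of two parallel computations.

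One concrete piece of bookkeeping you flagged but did not actually carry through: with the factors written as $1+(\mu/\lambda)U_k$ (consistent with $s_k=\lambda+\mu U_k$ as in the paper's Definition of the braiding elements), the expansion produces $(\mu/\lambda)^{|S|}$ with \emph{no} extra $(-1)^{|S|}$, and the telescoped coefficient of $U_S$ is then $(-1)^{i}(\mu/\lambda)^r$, not $(-1)^{(r-1)+(i+1)}(\mu/\lambda)^r$ as you assert.  The missing $(-1)^r$ is precisely what the paper's proof supplies by silently writing each factor as $(\lambda-\mu U_k)$ rather than $(\lambda+\mu U_k)$.  Your closing caveat about ``the signs $(-1)^{|S|}$ coming from the generator images'' shows you suspected this, but as written your displayed identity for the surviving coefficient is off by $(-1)^r$; you should reconcile the convention for $s_k$ with the sign in $\calJ^n_{i+1}$ before declaring equality.
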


The formula for the differentials in Theorem~\ref{theorem-jacobsthal} is convenient for explicit computations, and is used in~\cite{BoydHepworthStability} to describe (aspects of) the Fineberg module~$\F_n(a)$ in the case of~$n$ even.

\subsection{Complexes from algebras}

We hope that the results of this paper will encourage others to consider constructing and studying chain complexes of algebra modules from a combinatorial point of view.

The general idea is that one can combine combinatorial complexes with~$\frakS_n$-action (such as the complex of injective words) with finite-dimensional algebras (such as the Temperley-Lieb algebras) and construct algebraic analogues of the complexes.
Examples of possible complexes with~$\frakS_n$-action include the complex of injective words, the realisation of the poset of ordered partitions of~$\{1,\ldots,n\}$ with~$k\geq 2$ parts (this can be naturally identified with the permutahedron), and the realisation of the \emph{partition poset}, which consists of partitions of~$\{1,\ldots,n\}$ with~$1<k<n$ parts. 
Examples of possible algebras include the Temperley-Lieb algebras studied here, Temperley-Lieb algebras of types~$B$ and~$D$, variants such as the dilute and periodic Temperley-Lieb algebras, and cousins such as the Brauer, blob and partition algebras.
(Here we only list examples that are somewhat close to the~$\tl_n(a)$; there will be many other candidates besides.)

In all cases, one can undertake the following `process' that has as input a complex~$C$ with~$\frakS_n$-action and a family of algebras~$A_n$, and as output a chain complex of~$A_n$-modules. The process takes {an}~$\frakS_n$-orbit of simplices, and replaces it with the~$A_n$-module induced from the trivial module modulo the subalgebra corresponding to the stabiliser of the `original' orbit.
(This is of course only a vague process, and its success depends on the nature of~$C$ and the~$A_n$.)

\subsection{Outline}
{In Section \ref{section-homology} we recall the topological basics and results we use, in order to make this paper accessible to combinatorialists.}  In Section~\ref{section-temperley-lieb} we recall the basics of Temperley-Lieb algebras that we require in the rest of the paper.
Section~\ref{section-Wn} recalls the definition of the complex of planar injective words~$W(n)$ from~\cite{BoydHepworthStability}.
Section~\ref{section-dyck-paths} recalls Dyck paths, Dyck words, Catalan numbers and Fine numbers, and refines the usual relationship between them to take into account the height of the first peak, ending with a new account of the alternating sum in Equation~\eqref{equation-alternating}.
In Section~\ref{section-planar-dyck} we recall the relationship between planar diagrams and Dyck words, and prove Theorem~\ref{theorem-euler} and Theorem~\ref{corollary-alternating}.
In Section~\ref{section-young-tableaux} we prove Theorem~\ref{theorem-SYT}.
Finally in Section~\ref{section-jacobsthal} we recall the Jacobsthal numbers and prove Theorem~\ref{theorem-jacobsthal}.

\subsection{Acknowledgements}
The authors would like to thank the Max Planck Institute for Mathematics in Bonn for its support and hospitality, and the anonymous referees for their helpful comments. 

\section{Background on homology} \label{section-homology}

In this section we will give a brief overview of homology, with the aim of making the rest of the paper accessible to combinatorialists. For a more in depth introduction see Weibel~\cite[Section~1.1]{Weibel}, Brown~\cite[Section~I.0]{Brown} or Spanier~\cite[Section~4.1]{Spanier}, which we follow.

Fix an associative ring~$R$.
In what follows, \emph{$R$-module} will mean \emph{left-$R$-module}, unless specified otherwise.  (Note however that we could just as well work with right modules throughout if we wished.)

\begin{defn}
	A \emph{chain complex} of~$R$-modules~$(C,d)$ is a ~$\Z$-graded family of~$R$-modules~$C_n$, $n\in \Z$, and homomorphisms~$d^i:C_i\to C_{i-1}$ such that $d^{i}\circ d^{i+1}=0$ for any~$i\in \Z$. The maps~$d^i$ are called the \emph{differentials} or \emph{boundary operators} or \emph{boundary maps} of the chain complex.
We will often omit explicit mention of the differential, and write a chain complex $(C,d)$ as $C$ alone.
\end{defn}

Given a chain complex~$(C,d)$, the fact that~$d^{i}\circ d^{i+1}=0$ means that the image of~$d^{i+1}$ lies in the kernel of~$d^{i}$. Moreover both of these modules are submodules of~$C_{i}$. This motivates the definition of the $i$-th homology of the chain complex, which measures the failure of exactness (the kernel and image being equal) at~$C_i$.

\begin{defn}
	Given a chain complex~$(C,d)$, the~$i$th \emph{homology module} is the $R$-module given by the subquotient
	\[
	H_i(C)=\ker(d^{i})/\im(d^{i+1}).
	\]
\end{defn}

\begin{defn}
A chain complex is \emph{bounded} if $C_i\neq 0$ for only finitely many degrees $i$.
We will say that the complex is \emph{concentrated in degrees $m\leq i\leq n$} if $C_i=0$ except when $i$ lies in the specified range.
Given a bounded chain complex $C$, the \emph{top degree} is the largest degree $i$ for which $C_i\neq 0$.
A chain complex is \emph{finite} if it is bounded, and each chain module $C_i$ is finitely generated.
\end{defn}

\begin{defn}
	We say that a bounded chain complex~$(C,d)$ is \emph{highly acyclic} or \emph{highly connected} if its homology vanishes in all but the top degree.
\end{defn}

It follows that if~$C_i$ is concentrated in the range~$-1\leq i \leq m$, with $C_m\neq 0$, then it is highly connected if and only if~$H_i(C)=0$ except when~$i=m$. In this case, since the source of~$d^{m+1}$ is~$C_{m+1}=0$, it follows that~$\im(d^{m+1})=0$ and $H_{m}(C)$ is simply $\ker{(d^m)}$, which is a submodule of the top chain module~$C_m$.  Thus there is an exact sequence:
\[
	0\to H_m(C)\to C_m\xrightarrow{d^m} C_{m-1}.
\]

Now suppose that $R$ is equipped with a \emph{rank function}, i.e.~an assigment that sends each finitely generated $R$-module $M$ to an element $\rk(M)$ of some fixed abelian group, with the property that for every short exact sequence 
\[
	0\to A\to B\to C\to 0	
\]
of $R$-modules, we have
\[
	\rk(B) = \rk(A)+\rk(C).
\]
It follows in particular that $\rk(A)=\rk(A')$ whenever $A\cong A'$, and that $\rk(A\oplus B)=\rk(A)+\rk(B)$.  
Examples of rank functions include the case $R=\Z$ with $\rk(M)$ taken to be the usual rank of an abelian group, and the case $R=\mathbbm{k}$ for a field $\mathbbm{k}$ with $\rk(M)$ the vector-space dimension.
The restriction to finitely-generated modules is essential here, because otherwise the \emph{Eilenberg swindle} shows that $\rk(M)=0$ for all $M$, by considering the isomorphism \[M\oplus M^\infty\cong M^\infty.\]

\begin{defn}
	Suppose that $R$ is equipped with a rank function.
	Let~$(C,d)$ be a finite chain complex over $R$. Then we define the \emph{Euler characteristic} of~$(C,d)$ to be the alternating sum
	\[
	\chi(C)=\sum_i (-1)^i \rk(C_i).
	\]
\end{defn}

We wish to compare the Euler characteristic of a chain complex with that of its homology.  In order to do this, we make the assumption that $R$ is Noetherian.
This guarantees that any submodule of a finitely generated $R$-module is again finitely generated, and thus that the homology of a finite chain complex consists of finitely-generated $R$-modules, again concentrated in finitely many degrees.
With these assumptions we have the following.

\begin{proposition}\label{prop-spanier}
	Suppose that $R$ is Noetherian, and that $(C,d)$ is a finite chain complex of $R$-modules.  Then:
\[
	\chi(C)=\sum_i (-1)^i \rk(H_i(C)).
\]
\end{proposition}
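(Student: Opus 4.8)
The plan is to run the standard argument relating the Euler characteristic of a complex to that of its homology by decomposing the complex into short exact sequences built from cycles and boundaries. First I would introduce the cycles $Z_i=\ker(d^i)$ and the boundaries $B_i=\im(d^{i+1})$, both submodules of $C_i$. Since $R$ is Noetherian and each $C_i$ is finitely generated, every $Z_i$ and every $B_i$ is finitely generated, hence so is each $H_i(C)=Z_i/B_i$; moreover, since $C$ is bounded, only finitely many of these modules are nonzero, so all the alternating sums below are finite and the rank function may legitimately be applied termwise.

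Next I would record the two families of short exact sequences that organize the complex. The first exhibits $C_i$ as an extension of its image by its kernel,
\[
    0 \to Z_i \to C_i \xrightarrow{d^i} B_{i-1} \to 0,
\]
using that $C_i/Z_i \cong \im(d^i) = B_{i-1}$; the second is the defining sequence of homology,
\[
    0 \to B_i \to Z_i \to H_i(C) \to 0.
\]
Applying additivity of $\rk$ to each gives $\rk(C_i) = \rk(Z_i) + \rk(B_{i-1})$ and $\rk(Z_i) = \rk(B_i) + \rk(H_i(C))$, and combining them yields
\[
    \rk(C_i) = \rk(B_i) + \rk(B_{i-1}) + \rk(H_i(C)).
\]

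Finally I would multiply by $(-1)^i$ and sum over $i$. The contribution $\sum_i (-1)^i \rk(B_{i-1})$ equals $-\sum_i (-1)^i \rk(B_i)$ after reindexing, so it cancels against $\sum_i (-1)^i \rk(B_i)$, leaving $\chi(C) = \sum_i (-1)^i \rk(H_i(C))$, as claimed. I do not expect a genuine obstacle here: the only things needing care are the bookkeeping in this telescoping cancellation and the check that the Noetherian and boundedness hypotheses are exactly what make every rank in sight well-defined and every sum finite — which is precisely why those hypotheses appear in the statement.
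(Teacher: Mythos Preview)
Your proposal is correct and is exactly the standard argument. The paper does not spell out a proof but only cites Spanier and remarks that the statement is the Hopf trace formula applied to the identity map; your cycle--boundary decomposition with telescoping is precisely the argument one finds there (specialised to the identity), so there is no meaningful difference in approach.
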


This proposition is an example of the \emph{Hopf trace formula} applied to the identity map. A proof in the case $R=\mathbb{Z}$ can be found in Spanier~\cite[Theorem 4.3.14]{Spanier}, and that proof extends to the present setting.

\begin{defn}
Let $R$ be a Noetherian ring.  The group $G_0(R)$ is defined to be the Grothendieck group of the abelian category of finitely generated $R$-modules.
Unpacking this definition, $G_0(R)$ is obtained from the free abelian group generated by the isomorphism classes $[M]$ of finitely-generated $R$-modules $M$, by applying the relations $[B]=[A]+[C]$ whenever we have a short exact sequence
\[
	0\to A\to B \to C \to 0
\]
of finitely-generated $R$-modules.
Then $R$ admits a \emph{universal rank function}, with values in $G_0(R)$, in which the rank of a finitely generated module $M$ is defined to be simply $[M]$.
\end{defn}

Applying Proposition~\ref{prop-spanier} in the case of the universal rank function, we obtain the following:

\begin{prop}\label{prop-G0}
	Let~$R$ be Noetherian, and let $C$ be a finite chain complex over $R$.
	Then the equation
	\begin{equation}\label{equation-Kzero}
		\sum_m (-1)^m[C_m] = \sum_m (-1)^m[H_m(C)]
	\end{equation}
	holds in~$G_0(R)$.
\end{prop}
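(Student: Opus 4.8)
The plan is to deduce Proposition~\ref{prop-G0} directly from Proposition~\ref{prop-spanier} by specialising to the universal rank function. The content is entirely formal once Proposition~\ref{prop-spanier} is in hand, so the proof will be short; the only real work is checking that the universal rank function genuinely is a rank function in the sense required, i.e.~that $[-]\colon M\mapsto [M]$ satisfies $[B]=[A]+[C]$ for every short exact sequence $0\to A\to B\to C\to 0$ of finitely generated $R$-modules. But this is exactly the defining relation imposed on $G_0(R)$, so there is nothing to prove beyond citing the definition; one should also note in passing that $[M]=[M']$ whenever $M\cong M'$, which again is built into the construction of $G_0(R)$ as a quotient of the free abelian group on \emph{isomorphism classes}.

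First I would recall that $R$ is Noetherian, so that every submodule of a finitely generated $R$-module is finitely generated; hence for a finite chain complex $C$ over $R$, each chain module $C_m$ is finitely generated (by hypothesis) and each homology module $H_m(C)=\ker(d^m)/\im(d^{m+1})$ is a subquotient of $C_m$, hence also finitely generated, and $H_m(C)=0$ for all but finitely many $m$. Thus both sides of Equation~\eqref{equation-Kzero} are well-defined finite sums of classes in $G_0(R)$.

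Next I would invoke Proposition~\ref{prop-spanier} with $\rk=[-]$ the universal rank function valued in $G_0(R)$: since $[-]$ is a rank function, the proposition gives $\sum_i(-1)^i[C_i]=\sum_i(-1)^i[H_i(C)]$ in $G_0(R)$, which is precisely Equation~\eqref{equation-Kzero} after relabelling the summation index. This completes the argument.

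The main (and really only) obstacle is a bookkeeping one: making sure the hypotheses of Proposition~\ref{prop-spanier} are met, namely that $R$ is Noetherian and that the complex is \emph{finite} in the technical sense (bounded, with finitely generated chain modules), and that the target abelian group of the rank function — here $G_0(R)$ — is fixed independently of $M$, which it is. There is no analytic or combinatorial difficulty; the proposition is a direct corollary, and the proof can be stated in two or three sentences.
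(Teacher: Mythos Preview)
Your proposal is correct and matches the paper's approach exactly: the paper simply states that Proposition~\ref{prop-G0} is obtained by applying Proposition~\ref{prop-spanier} in the case of the universal rank function, with no further argument given. Your additional bookkeeping (checking that the homology modules are finitely generated and that $[-]$ is a rank function by definition) is appropriate detail that the paper leaves implicit.
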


Note that if $R$ is semisimple, then $G_0(R)$ coincides with the $K$-theory group $K_0(R)$ obtained from finitely generated projective $R$-modules, and that in these cases both coincide with the group of virtual representations of $R$.  (See Example~2.1.4 in Weibels~\cite{WeibelK}.)

\section{Temperley-Lieb algebras}
\label{section-temperley-lieb}

In this section we will cover the basic facts about Temperley-Lieb algebras that we require in the rest of the paper. There is some overlap between the material recalled here and in~\cite{BoydHepworthStability}. 
General references for readers new to the~$\tl_n(a)$ are Section~5.7 of Kassel and Turaev's book~\cite{KasselTuraev} on the braid groups, and especially Sections~1 and~2 of Ridout and Saint-Aubin's survey on the representation theory of the~$\tl_n(a)$~\cite{RidoutStAubin}.

\subsection{Definitions}

A \emph{planar diagram on~$n$ strands} consists of two vertical lines in the plane, decorated with~$n$  dots labelled~$1,\ldots,n$ from bottom to top, together with a collection of~$n$ arcs joining the dots in pairs.  
The arcs must lie between the vertical lines, they must be disjoint, and the diagrams are taken up to isotopy.  
For example, here are two planar diagrams in the case~$n=5$:
\[
    x=
    \begin{tikzpicture}[scale=0.4, baseline=(base)]
        \coordinate (base) at (0,2.75);
        \draw[line width = 1](0,0.5)--(0,5.5);
        \draw[line width = 1](6,0.5)--(6,5.5);
        \foreach \x in {1,2, 3,4,5}{
            \draw[fill=black] (0,\x) circle [radius=0.15] (6,\x) circle [radius=0.15];
            \draw (0,\x) node[left] {$\scriptstyle{\x}$};
            \draw (6,\x) node[right] {$\scriptstyle{\x}$};
        } 
        \draw (0,1) to[out=0,in=-90] (1,1.5) to[out=90,in=0] (0,2);
        \draw (0,4) to[out=0,in=-90] (1,4.5) to[out=90,in=0] (0,5);
        \draw (6,4) to[out=180,in=-90] (5,4.5) to[out=90,in=180] (6,5);
        \draw (6,2) to[out=180,in=-90] (5,2.5) to[out=90,in=180] (6,3);
        \draw (0,3) .. controls (2,3) and (4,1) .. (6,1);
    \end{tikzpicture}
    \qquad\qquad
    y=
    \begin{tikzpicture}[scale=0.4, baseline=(base)]
        \coordinate (base) at (0,2.75);
        \draw[line width = 1](0,0.5)--(0,5.5);
        \draw[line width = 1](6,0.5)--(6,5.5);
        \foreach \x in {1,2, 3,4,5}{
            \draw[fill=black] (0,\x) circle [radius=0.15] (6,\x) circle [radius=0.15];
            \draw (0,\x) node[left] {$\scriptstyle{\x}$};
            \draw (6,\x) node[right] {$\scriptstyle{\x}$};
        } 
        \draw (0,2) to[out=0,in=-90] (1,2.5) to[out=90,in=0] (0,3);
        \draw (6,4) to[out=180,in=-90] (5,4.5) to[out=90,in=180] (6,5);
        \draw (6,2) to[out=180,in=-90] (5,2.5) to[out=90,in=180] (6,3);
        \draw (0,1) to[out=0,in=-90] (2,2.5) to[out=90,in=0] (0,4);
        \draw (0,5) .. controls (3,5) and (3,1) .. (6,1);
    \end{tikzpicture}
\]
We will often omit the labels on the dots.

\begin{defn}[The Temperley-Lieb algebra~$\tl_n(a)$]
\label{definition-temperley-lieb}
    Let~$R$ be a commutative ring and let~$a\in R$.
    The \emph{Temperley-Lieb algebra}~$\tl_n(a)$ is the~$R$-module with basis given by the planar diagrams on~$n$ strands, and with multiplication defined by placing diagrams side-by-side and joining the ends.
    Any closed loops created by this process are then erased and replaced with a factor of~$a$.
\end{defn}

For example, the product~$xy$ of the elements~$x$ and~$y$ above is {shown in Section~\ref{subsect:TL intro} of the introduction}.

We have subscribed to the heresy of~\cite{RidoutStAubin} by drawing planar diagrams that go from left to right rather than top to bottom.
The identity element of~$\tl_n(a)$ is the planar diagram in which each dot on the left is joined to the corresponding dot on the right by a straight horizontal line.

For~$1\leq i \leq n-1$, we define~$U_i\in\tl_n(a)$ to be the planar diagram shown below.
\[
    U_i
    =
    \begin{tikzpicture}[scale=0.4,baseline=(base)]
    \coordinate (base) at (0,4.3);
    \foreach \x in {1, 3,4,5,6,8}
    \foreach \y in {0,6}
    \draw[fill=black, line width=1] (\y,\x) circle [radius=0.15](\y,.5)--(\y,8.5);
    \foreach \x in {1, 3,6,8} 
    \draw (0,\x) --(6,\x);
    \foreach \x in {3}
   \draw (\x,2.2) node {$\scriptstyle{\vdots}$} (\x,7.2) node {$\scriptstyle{\vdots}$};
    \draw
        (7,4) node {$\scriptstyle{i}$}  
        (7,5) node {$\scriptstyle{i+1}$}
        (7,1) node {$\scriptstyle{1}$}
        (7,8) node {$\scriptstyle{n}$};
    \draw (-1,5)node {$\scriptstyle{\phantom{i+1}}$};
    \draw (0,4) to[out=0,in=-90] (1,4.5) to[out=90,in=0] (0,5);
    \draw (6,4) to[out=180,in=-90] (5,4.5) to[out=90,in=180] (6,5);
    \end{tikzpicture}
\]
We refer to an arc joining adjacent dots as a \emph{cup}.
Thus~$U_i$ has a single cup on left and right joining dots~$i$ and~$i+1$.
The elements~$U_i$ satisfy the following relations:
\begin{enumerate}
    \item
   ~$U_iU_j=U_jU_i$ for~$j\neq i\pm 1$
    \item
   ~$U_iU_jU_i = U_i$ for ~$j=i\pm 1$
    \item
   ~$U_i^2 = aU_i$ for all~$i$.
\end{enumerate}
The reader can easily verify these relations for themselves; two of them are shown in Figure~\ref{fig: tl relations}. 
In fact, the generators~$U_i$ together with the three relations above form a presentation of~$\tl_n(a)$ as an~$R$-algebra: Elements of the Temperley-Lieb algebra are formal sums of monomials in the~$U_i$, with coefficients in the ground ring~$R$, modulo the relations above. 
This is proved in \cite[Theorem~2.4]{RidoutStAubin},  \cite[Theorem~5.34]{KasselTuraev}, and \cite[Section~6]{KauffmanDiagrammatic}. We often write~$\tl_n(a)$ as~$\tl_n$. We note here that~$\tl_0=\tl_1=R$.
\begin{figure}
    \centering
    \subfigure[The relation~$U_i^2=aU_i$.]{
    \begin{tikzpicture}[scale=0.4]
        \foreach \x in {1, 3,4,5,6,8}
        \foreach \y in {0,4,8,10,16}
        \draw[fill=black, line width=1] (\y,\x) circle [radius=0.15] (\y,.5)--(\y,8.5);
        \foreach \x in {1, 3,6,8}
        \draw[black] (0,\x) --(8,\x) (10,\x)--(16,\x);
        \foreach \x in {2,6,13}
        \draw (\x,2.2) node {$\scriptstyle{\vdots}$} (\x,7.2) node {$\scriptstyle{\vdots}$};
        \draw (-1,1)node {$\scriptstyle{1}$}
        (-1,4) node {$\scriptstyle{i}$}  (-1,5) node {$\scriptstyle{i+1}$}  (-1,8) node {$\scriptstyle{n}$};
        \foreach \x in {0,4,10}
        \draw (\x,4) to[out=0,in=-90] (\x+1.5,4.5) to[out=90,in=0] (\x,5);
        \foreach \x in {4,8,16}
        \draw (\x,4) to[out=180,in=-90] (\x-1.5,4.5) to[out=90,in=180] (\x,5);
        \draw (13,4) to[out=0, in=-90] (14,4.5) to[out=90,in=0] (13,5) to[out=180,in=90] (12,4.5) to[out=-90, in=180] (13,4);
        \draw (9, 4.5) node[scale=1] {$=$};
    \end{tikzpicture}}
    \subfigure[The relation~$U_iU_{i+1}U_i=U_i$.]{
    \begin{tikzpicture}[scale=0.4]
        \foreach \x in {1,3,4,5,6,8}
        \foreach \y in {0,3,6,9,12,16}
        \draw[fill=black, line width=1] (\y,\x) circle [radius=0.15](\y,.5)--(\y,8.5);
        \foreach \x in {1,3,8}
        \draw[black] (0,\x) --(9,\x) (12,\x)--(16,\x);
        \foreach \x in {1.5,4.5,7.5,14}
        \draw (\x,2.2) node {$\scriptstyle{\vdots}$} (\x,7.2) node {$\scriptstyle{\vdots}$};
        \draw (-1,1)node {$\scriptstyle{1}$}
        (-1,4) node {$\scriptstyle{i}$}  (-1,5) node {$\scriptstyle{i+1}$}  (-1,6) node {$\scriptstyle{i+2}$}  (-1,8) node {$\scriptstyle{n}$};
        \foreach \x in {0,6,12}
        \draw (\x,4) to[out=0,in=-90] (\x+1,4.5) to[out=90,in=0] (\x,5);
        \foreach \x in {3,9,16}
        \draw (\x,4) to[out=180,in=-90] (\x-1,4.5) to[out=90,in=180] (\x,5);
        \draw (6,5) to[out=180,in=-90] (5,5.5) to[out=90,in=180] (6,6) -- (9,6) (12,6)--(16,6) (3,4)--(6,4) (0,6)--(3,6) to[out=0,in=90] (4,5.5) to[out=-90, in=0] (3,5);
        \draw (10.5, 4.5) node[scale=1.5] {$=$};
    \end{tikzpicture}
    }
    \caption{Diagrammatic relations in~$\tl_n(a)$.}
    \label{fig: tl relations}
\end{figure}

\subsection{Induced modules}\label{section-induced}

\begin{defn}[The trivial module~$\t$]
    The \emph{trivial} module~$\t$ of the Temperley-Lieb algebra~$\tl_n(a)$ is the module consisting of~$R$ with the action of~$\tl_n(a)$ in which every diagram acts as multiplication by~$0$, except for the identity diagram.
    Equivalently, it is the module on which all of the generators~$U_1,\ldots,U_{n-1}$ act as~$0$.
    We can regard~$\t$ as either a left or right module, and we will usually do that without indicating so in the notation.
\end{defn}	

\begin{defn}[Sub-algebra convention] 
    For~$m\leq n$, we will regard~$\tl_m$ as the sub-algebra of~$\tl_n$ generated by the elements~$U_1,\ldots,U_{m-1}$, or equivalently, the subalgebra in which dots~$m+1,\ldots,n$ on the left are joined to the corresponding dots on the right by horizontal straight lines.
    We will often regard~$\tl_n$ as a left~$\tl_n$-module and a right~$\tl_m$-module, so that we obtain the left~$\tl_n$-module~$\tl_n\otimes_{\tl_m}\t$.
\end{defn}

The induced modules~$\t\uparrow_{\tl_{m}}^{\tl_n}:=\tl_n\otimes_{\tl_m}\t$ will be the building blocks of the complex~$W(n)$. 

A \emph{planar diagram on~$n$ strands with black box of size~$m$} is a planar diagram on~$n$ strands with the dots~$1,\ldots,m$ on the right encapsulated within a \emph{black box}, such that there are no cups with {both} endpoints in the black box.
For example, the planar diagrams with~$4$ strands and black box of size~$3$ are shown below.
\[
    	\begin{tikzpicture}[scale=0.45]
    	\foreach \x in {1,2,3,4}
    	\draw (-1,\x)node {$\scriptstyle{\x}$}; 
    	\foreach \x in {1,2,3,4}
    	\foreach \y in {0,3,6,9,12,15,18,21}
    	\draw[fill=black, line width=1] (\y,\x) circle [radius=0.15]  (\y,.5)--(\y,4.5);
    	\foreach \x\y in {0/1, 0/2, 6/1, 18/1,18/2,18/3,18/4 }
    	\draw[black] (\x,\y) --(\x+3,\y);
    	\foreach \x\y in {0/3, 6/2, 12/1}
    	\draw (\x,\y) to[out=0,in=-90] (\x+1,\y+.5) to[out=90,in=0] (\x,\y+1);
    	\foreach \x\y in {3/3, 9/3, 15/3}
    	\draw (\x,\y) to[out=180,in=-90] (\x-1,\y+.5) to[out=90,in=180] (\x,\y+1);
    	\foreach \x\y in {6/4, 12/4, 12/3}
    	\draw (\x,\y) to[out=0, in=120] (\x+1.5, \y-1) to[out=300, in=180] (\x+3,\y-2);
    	\foreach \x in {3,9,15,21}
    	\draw[line width=0.2cm] (\x,.8)--(\x,3.2); 
    	\end{tikzpicture}
\]
The~$R$-linear span of the planar diagrams on~$n$ strands with black box of size~$m$ has the structure of a left~$\tl_n(a)$-module.
If~$x$ is a planar diagram on~$n$ strands, and~$y$ is a planar diagram on~$n$ strands with black box of size~$m$, then the product~$x\cdot y$ is defined by pasting the diagrams in the usual way, subject to the condition that if the pasting produces a cup attached to the black box, then the diagram is identified with~$0$.  For example:
\[
    	\begin{tikzpicture}[scale=0.5]
    	\foreach \x in {1,2,3,4}
    	\foreach \y in {3,6,9,12,15,18,21}
    	\draw[fill=black,line width=1]  (\y,\x) circle [radius=0.15]  
    	(\y,.5)--(\y,4.5);
    	\foreach \x\y in {3/1,12/1 }
    	\draw[black] (\x,\y) --(\x+3,\y);
    	\foreach \x\y in { 12/2,3/2, 9/1,9/3, 18/1,18/3}
    	\draw (\x,\y) to[out=0,in=-90] (\x+1,\y+.5) to[out=90,in=0] (\x,\y+1);
    	\foreach \x\y in {15/3,6/3, 12/1,12/3, 21/3,21/1}
    	\draw (\x,\y) to[out=180,in=-90] (\x-1,\y+.5) to[out=90,in=180] (\x,\y+1);
    	\foreach \x\y in {3/4, 12/4}
    	\draw (\x,\y) to[out=0, in=120] (\x+1.5, \y-1) to[out=300, in=180] (\x+3,\y-2);
    	\foreach \x in {6,15,21}
    	\draw[line width=0.2cm] (\x,.8)--(\x,2.2); 
    	\draw (1,2.5) node {$U_1U_3\,\cdot$} (7.5,2.5) node {$=$} (16.5,2.5) node {$=$}(22.5,2.5) node {$=0.$};
    	\end{tikzpicture}
\]

\begin{proposition}\label{proposition-black-box}
    Given~$0\leq m\leq n$,
   ~$\tl_n(a)\otimes_{\tl_m(a)}\t$ is isomorphic to the module of planar diagrams on~$n$ strands with black box of size~$m$.
\end{proposition}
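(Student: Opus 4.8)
The plan is to write down the evident candidate isomorphism and verify it by hand.  Let $M$ denote the module of planar diagrams on $n$ strands with black box of size $m$, and for a diagram $x$ on $n$ strands write $\overline{x}$ for the element of $M$ obtained by enclosing the dots $1,\dots,m$ on the right of $x$ in a black box, with the convention that $\overline{x}=0$ if $x$ has an arc with both endpoints among those dots.  I would then study the map
\[
    \phi\colon\ \tl_n(a)\otimes_{\tl_m(a)}\t\ \longrightarrow\ M,\qquad \phi(x\otimes r)=r\cdot\overline{x},
\]
and prove that $\phi$ is a well-defined isomorphism of left $\tl_n(a)$-modules.

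To see that $\phi$ is well defined one checks that $(y,r)\mapsto r\cdot\overline{y}$ is balanced over $\tl_m(a)$, i.e.\ that $\overline{yu}=\varepsilon(u)\,\overline{y}$ for diagrams $y\in\tl_n(a)$ and $u\in\tl_m(a)$, where $\varepsilon(u)$ is $1$ if $u$ is the identity and $0$ otherwise (this being how $\tl_m(a)$ acts on $\t$).  If $u$ is the identity this is clear, and if $u$ is any other diagram then, having fewer than $m$ through-strands, $u$ has an arc joining two of the dots $1,\dots,m$ on its right boundary; pasting $y$ on the left does not affect this arc, so $yu$ has an arc inside the black box and $\overline{yu}=0$.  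Left $\tl_n(a)$-linearity of $\phi$ is immediate, since pasting a diagram on the left only ever interacts with the black box through the relation ``an arc on the box is $0$'' already built into $M$.  Surjectivity is also clear: the diagrams with black box of size $m$ form an $R$-basis of $M$, and such a basis element $\overline{x}$ equals $\phi(x\otimes 1)$.

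The content is in injectivity, which I would deduce from the claim that $\tl_n(a)\otimes_{\tl_m(a)}\t$ is spanned over $R$ by the elements $x\otimes 1$ with $x$ a diagram having no arc among the right-hand dots $1,\dots,m$.  Granting this, $\phi$ carries this spanning set bijectively onto the $R$-basis $\{\overline{x}\}$ of $M$, so any element of $\ker\phi$ must vanish.  Since the source is certainly spanned by all $x\otimes 1$, the claim reduces to showing that $x\otimes 1=0$ whenever the diagram $x$ has an arc inside the box.  By planarity the innermost such arc joins two adjacent dots $j,j+1$ (with $j\le m-1$), so it suffices to prove the lemma
\[
    \tl_n(a)\cdot U_j\ =\ R\text{-span}\{\,\text{diagrams on $n$ strands having an arc joining the dots $j,j+1$ on the right}\,\}.
\]
Indeed, given the lemma we may write $x=\sum_k c_k\, z_k U_j$ with the $z_k$ diagrams, whence $x\otimes 1=\sum_k c_k\,z_k\otimes(U_j\cdot 1)=0$ because $U_j\in\tl_m(a)$ acts as $0$ on $\t$.

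Proving the lemma is the step I expect to be the main obstacle.  The inclusion $\subseteq$ is easy: the arc joining $j$ and $j+1$ lies wholly on the right boundary of $U_j$, hence survives (up to a power of $a$ coming from closed loops) in every product $xU_j$.  For $\supseteq$ one must check that $z\mapsto zU_j$ already surjects from the diagrams \emph{without} a $j,j+1$ arc on the right onto the diagrams \emph{with} one.  Given such a diagram $y$ with right-hand arc $c=\{j,j+1\}$: if $n=2$ then $y=U_j$ and we take $z$ to be the identity; otherwise at least one of the right-hand dots $j-1$, $j+2$ is present, say $j+2$, joined in $y$ to some point $p$.  Let $z$ be obtained from $y$ by deleting the arcs $c$ and $\{p,j+2\}$ and inserting instead the adjacent right-hand arc $\{j+1,j+2\}$ together with the arc $\{p,j\}$ (``absorbing $c$ into the neighbouring arc'', the new arc $\{p,j\}$ being routed alongside the old arc $\{p,j+2\}$).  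A short check of non-crossing shows that $z$ is again a planar diagram, that $z$ has no $j,j+1$ arc on the right (the dot $j$ is joined to $p\neq j+1$), and that multiplying $z$ by $U_j$ on the right short-circuits the dots $j,j+1$ of $z$, recovers the arc $\{p,j+2\}$ and prints the arc $c$, creating no closed loop, so that $zU_j=y$.  The case where only $j-1$ is available is symmetric, and this completes the proof of the lemma, and hence of the proposition.
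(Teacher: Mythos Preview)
Your proof is correct and follows essentially the same route as the paper's: both identify the quotient $\tl_n/I_m$ (where $I_m$ is the left ideal generated by $U_1,\dots,U_{m-1}$) with the black-box module, and the crux in either case is knowing that $I_m$ is exactly the $R$-span of diagrams with a cup on the right among dots $1,\dots,m$.  The paper states this fact and cites \cite[Lemma~2.12]{BoydHepworthStability} for the isomorphism $\tl_n\otimes_{\tl_m}\t\cong\tl_n/I_m$, whereas you prove both ingredients directly---your lemma on $\tl_n\cdot U_j$ is precisely the description of $I_m$ decomposed generator-by-generator, and your explicit ``slide the cup into the neighbouring arc'' construction of $z$ with $zU_j=y$ (avoiding any invertibility assumption on $a$) makes the argument fully self-contained.
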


\begin{proof}
    Let~$I_m$ denote the left ideal of~$\tl_n$ generated by the elements~$U_1,\ldots,U_{m-1}$.
    In other words,~$I_m$ is the span of all diagrams which have a cup on the right among dots~$1,\ldots,m$.
    It is shown in~\cite[Lemma 2.12]{BoydHepworthStability} that~$\tl_n\otimes_{\tl_m}\t$ is isomorphic to~$\tl_n/I_m$ under the isomorphism that sends~$x\otimes 1\in\tl_n\otimes_{\tl_m}\t$ to~$x+I_m\in\tl_n/I_m$.
    Since~$\tl_n$ has basis given by planar diagrams, and~$I_m$ has basis given by planar diagrams with cup among dots~$1,\ldots,m$ on the right, the quotient~$\tl_n/I_m$ has basis given by the diagrams with no cups among dots~$1,\ldots,m$ on the right, which we can identify with the~$n$-planar diagrams with black box of size~$m$.
    This determines an~$R$-linear isomorphism between~$\tl_n\otimes_{\tl_m}\t$ and the module of  planar diagrams on~$n$ strands with black box of size~$m$, and it is simple to see that this respects the module structures.
\end{proof}

\subsection{The braiding elements}

Now we suppose that~$a=v+v^{-1}$ where~$v\in R$ is a unit.

\begin{defn}[The braiding elements]\label{definition-si}
    Define~$s_1,\ldots,s_{n-1}\in\tl_n(v+v^{-1})$ by setting
    \[
        s_i=\lambda+\mu U_i
    \]
    where~$\lambda,\mu\in R$ are defined by one of the following {two} options:
    \begin{enumerate}
        \item
       ~$\lambda=-1$ and~$\mu=v$, so that~$s_i=vU_i-1$
        \item
       ~$\lambda=v^2$ and~$\mu=-v$, so that~$s_i=v^2-vU_i$.
    \end{enumerate}
\end{defn}
It is now easy to verify that the elements~$s_i$ satisfy the \emph{braid relations}:
\begin{itemize}
	\item
	$s_i s_j = s_js_i$ for~$i\neq j\pm 1$
	\item
	$s_is_js_i = s_js_is_j$ for~$i=j\pm 1$.
\end{itemize}
Moreover, the~$s_i$ are invertible and satisfy the rule:
\[
s_i^{-1} = \lambda^{-1}+\mu^{-1} U_i.
\]
It is also immediate to verify that $s_i$ acts on~$\t$ as multiplication by~$\lambda$.

The~$s_i$ in fact form the generators in a presentation of~$\tl_n(v+v^{-1})$ as a quotient of the \emph{Iwahori-Hecke algebra} of type~$A_{n-1}$.
In particular, they satisfy further relations of degree~$2$ and~$3$, that we will not list here.
See~\cite{BoydHepworthStability} for more details.

\begin{remark}\label{remark-smoothing}
    There is a homomorphism from (the group algebra of) the braid group into~$\tl_n(v+v^{-1})$ given on generators by~$s_i\mapsto s_i$.  
    This can be regarded as a Kauffman bracket-style smoothing operation from braids to planar diagrams:
    The formula for~$s_i$ tells us to smooth a positive crossing in the two possible ways with weights~$\lambda$ and~$\mu$ as in Figure~\ref{fig: smoothing}, and the formula for~$s_i^{-1}$ tells us to smooth a negative crossing in the two possible ways with weights~$\lambda^{-1}$ and~$\mu^{-1}$.
    In general, given a braid diagram with~$p$ crossings, each crossing is smoothed in the~$2$ possible ways, with appropriate weights, to obtain a linear combination of~$2^p$ planar diagrams.
    We may also consider hybrid diagrams obtained by concatenating planar and braid diagrams, or obtained by partially smoothing braid diagrams.
    \begin{figure}
        \centering
        	\begin{tikzpicture}[scale=0.4]
        	\foreach \x in {1,3,4,5,6,8}
        	\foreach \y in {0,3,6,9,12,15}
        	\draw[fill=black, line width=1] (\y,\x) circle [radius=0.15] (\y, 0.5)--(\y, 8.5);
        	\foreach \x in {1,3,8}
        	\draw[black] (0,\x) --(3,\x) (6,\x)--(9,\x) (12,\x)--(15,\x);
        	\foreach \x in {1.5,7.5,13.5}
        	\draw (\x,2.2) node {$\scriptstyle{\vdots}$} (\x,7.2) node {$\scriptstyle{\vdots}$};
        	\draw (-1,1)node {$\scriptstyle{1}$} 
        	(-1,4) node {$\scriptstyle{i}$}  (-1,5) node {$\scriptstyle{i+1}$}  (-1,8) node {$\scriptstyle{n}$};
        	\foreach \x in {12}
        	\draw (\x,4) to[out=0,in=-90] (\x+1,4.5) to[out=90,in=0] (\x,5);
        	\foreach \x in {15}
        	\draw (\x,4) to[out=180,in=-90] (\x-1,4.5) to[out=90,in=180] (\x,5);
        	\draw (6,6) -- (9,6) (12,6)--(15,6) (9,4)--(6,4) (0,6)--(3,6) (6,5)--(9,5);
        	\draw (10.5, 4.5) node[scale=1] {$+\,\mu$};
        	\draw (4.5, 4.5) node[scale=1] {$=\, \lambda$};    	
        	\draw (0,5) to[out=0, in=140] (1.5, 4.5) to[out=320, in=180] (3,4);
        	\draw[white,fill=white] (1.5,4.5) circle [radius=0.15];
        	\draw (0,4) to[out=0, in=220] (1.5, 4.5) to[out=40, in=180] (3,5);
        	\draw (1.5,-.8) node[] {$s_i$}(4.5,-.8) node[] {$= \, \lambda$}(10.5,-.8) node[] {$+\mu$}(13.5,-.8) node[] {$U_i$};
        	\end{tikzpicture}
        \caption{Smoothings of~$s_i$.}
        \label{fig: smoothing}
    \end{figure}
\end{remark}

\section{Injective words and planar injective words}
\label{section-Wn}

Throughout this section we will consider the Temperley-Lieb algebra~$\tl_n(a)$, where~$a=v+v^{-1}$ for~$v\in R$ a unit.
We will make use of the elements~$s_1,\ldots,s_{n-1}$ of Definition~\ref{definition-si}.

An \emph{injective word} on the letters~$\{1,\ldots,n\}$ is a tuple~$(x_0,\ldots,x_i)$ whose entries come from the set~$\{1,\ldots,n\}$, with no repeated entries in the tuple.
Injective words form a poset under the subword relation: {$v$ is a subword of $w$, written $w\geq v$, if~$w=(x_0,\ldots,x_i)$ and $v=(x_{k_0},\ldots,x_{k_j})$ for~$0\leq j \leq i$ and $0\leq k_0 <\cdots<k_j\leq i$.}
The {complex of injective words} is most commonly defined as the realisation of this poset, as in~\cite{Farmer} or~\cite{BjornerWachs}.
However, note that for any injective word~$w$, the poset of elements~$v\leq w$ is Boolean. It follows that the poset of injective words is a simplicial poset, and its realisation admits a cell structure in which the cells are simplices, with an~$i$-simplex for each word~$(x_0,\ldots,x_i)$.
This cell complex can be obtained from a semi-simplicial set, as in~\cite{RandalWilliamsConfig}.
For us, the complex of injective words will be the augmented cellular chains of the cell complex described above, studied for example in~\cite{Kerz}.
We define it explicitly now.

\begin{defn}[The complex of injective words]
    The \emph{complex of injective words} is the chain complex~$\calC(n)$ of~$\frakS_n$-modules, concentrated in degrees $-1$ to $(n-1)$, that in degree~$i$ is the free~$R$-module with basis given by tuples~$(x_0,\ldots,x_i)$ where~$x_0,\ldots,x_i\in\{1,\ldots,n\}$ and no letter appears more than once.  
    We allow the empty word~$()$, which lies in degree~$-1$.
    The differential of~$\calC(n)$ sends a word~$(x_0,\ldots,x_i)$ to the alternating sum~$\sum_{j=0}^i(-1)^j(x_0,\ldots,\widehat{x_j},\ldots,x_i)$.
\end{defn}

We can rewrite~$\calC(n)$ in terms of the group algebra~$R\frakS_n$.
Denote by~$s_1,\ldots,s_{n-1}\in\frakS_n$ the adjacent transpositions~$s_i=(i\ \ i+1)$.
These elements satisfy the braid relations listed beneath Definition~\ref{definition-si}.
There is an isomorphism 
\[\calC(n)_i\cong R\frakS_n\otimes_{R\frakS_{n-i-1}}\t,\] where~$\t$ is the trivial module of~$R\frakS_{n-i-1}$. Under this isomorphism the word $(x_0,\ldots,x_i)$ is sent to~$\sigma\otimes 1$ where~$\sigma\in\frakS_n$ is a permutation such that $\sigma(n-i+j) = x_j$.
Furthermore, the differential~$d\colon\calC(n)_i\to\calC(n)_{i-1}$ becomes the map
\[
    d
    \colon 
    R\frakS_n\otimes_{R\frakS_{n-i-1}}\t
    \longrightarrow
    R\frakS_n\otimes_{R\frakS_{n-i}}\t
\]
defined by~$d(x\otimes 1) = \sum_{j=0}^i(-1)^jx\cdot (s_{n-i+j-1}\cdots s_{n-i})\otimes 1$. (See~\cite{HepworthIH}.)
This description inspires the following definition of the planar analogue.

\begin{defn}[The complex of planar injective words \cite{BoydHepworthStability}]\label{defn: W(n) and boundary maps}
    Let~$R$ be a commutative ring, let~$v\in R^\times$, let~$a=v+v^{-1}$, and let~$n\geq 0$.
    The complex of \emph{planar injective words} is the chain complex~$W(n)_\ast$ of~$\tl_n(a)$-modules defined as follows.
    For~$i$ in the range~$-1\leq i\leq n-1$, the degree-$i$ part of~$W(n)_\ast$ is defined by
    \[
        W(n)_i = \tl_n(a)\otimes_{\tl_{n-i-1}(a)}\t
    \]
    and in all other degrees we set~$W(n)_i=0$.
    Note that
    \[W(n)_{-1}=\tl_n(a)\otimes_{\tl_{n}(a)}\t=\t.\]
    For~$i\geq 0$ the boundary map~$d^i\colon W(n)_i\to W(n)_{i-1}$ is defined to be the alternating sum~$\sum_{j=0}^i (-1)^jd^i_j$, where 
    \begin{align*}
        d^i_j\colon\tl_n(a)\otimes_{\tl_{n-i-1}(a)}\t&\rightarrow\tl_n(a)\otimes_{\tl_{n-i}(a)}\t\\
        x\otimes r &\mapsto (x\cdot s_{n-i+j-1}\cdots s_{n-i})\otimes\lambda^{-j}r.
    \end{align*}
    In the expression~$s_{n-i+j-1}\cdots s_{n-i}$, the indices decrease from left to right.
    Observe that~{$d^i_j$} is well-defined because the elements~$s_{n-i},\ldots,s_{n-i+j-1}$ all commute with all generators of~$\tl_{n-i-1}(a)$. {See \cite[Lemma 4.8]{BoydHepworthStability} for the proof that~$d^{i-1}\circ d^i=0$.}
    We have depicted~$W(n)_\ast$ in Figure~\ref{figure-wn}. For notational purposes we will write~$W(n)$ and only use a subscript when identifying a particular degree.
\end{defn}
    
\begin{figure}
 	\[
        \xymatrix{
            \tl_n\ootimes{0}\t
            \ar[d]_{d^{n-1}}
            &
            n-1
            \\
            \tl_n\ootimes{1}\t
            \ar[d]_{d^{n-2}}
            &
            n-2
            \\
            \vdots
            \ar[d]_{d^2}
            &
            {}
            \\
            \tl_n\otimes_{\tl_{n-2}}\t
            \ar[d]_{d^1}
            &
            1
            \\
            \tl_n\otimes_{\tl_{n-1}}\t
            \ar[d]_{d^0}
            &
            0
            \\
            \t
            &
            -1
        }
    \]   
    \caption{The complex~$W(n)$}
    \label{figure-wn}
\end{figure}

\begin{remark}[Visualising~$W(n)$]
    Recall from the diagrammatic description of the induced module~$\tl_n(a)\otimes_{\tl_m(a)}\t$, when~$m\leq n$, given in Section~\ref{section-induced} that elements of~$W(n)_i$ can be regarded as diagrams where the first~$n-i-1$ dots on the right are encapsulated within a black box, and if any cups can be absorbed into the black box, then the diagram is identified with~$0$.
    The differential~$d\colon W(n)_i\to W(n)_{i+1}$ is then given by pasting special elements onto the right of a diagram, followed by taking their signed and weighted sum. These special elements each enlarge the black box by an extra strand, and plumb one of the free strands into the new space in the black box, see Figure~\ref{fig: black box differential}.
\begin{figure}[h!]\label{fig:relative}
    	\begin{tikzpicture}[scale=0.38]
    	\foreach \x in {1,2,3,4}
    	\foreach \y in {0,3,6,9,12,15,18,21,24,27,30}
    	\draw[fill=black, line width=1] (\y,\x) circle [radius=0.15] (\y,0.5)--(\y,4.5);
    	\foreach \x\y in {9/1,9/2,9/3,9/4,18/4}
    	\draw[black] (\x,\y) --(\x+3,\y);
    	\foreach \x\y in {0/3,  6/3, 15/3, 24/3}
    	\draw (\x,\y) to[out=0,in=-90] (\x+1,\y+.5) to[out=90,in=0] (\x,\y+1);
    	\foreach \x\y in {3/1, 9/1, 18/1, 27/1}
    	\draw (\x,\y) to[out=180,in=-90] (\x-1,\y+.5) to[out=90,in=180] (\x,\y+1);
    	\foreach \x\y in {0/1,6/1, 15/1, 24/1,0/2, 6/2, 15/2, 24/2}
    	\draw (\x,\y) to[out=0, in=240] (\x+1.5, \y+1) to[out=60, in=180] (\x+3,\y+2);
    	\foreach \x\y in {27/4}
    	\draw (\x,\y) to[out=0, in=120] (\x+1.5, \y-1) to[out=300, in=180] (\x+3,\y-2);
    	\draw (18,3) to[out=0, in=140] (19.5, 2.5) to[out=320, in=180] (21,2);
    	\foreach \x\y in {19.5/2.5, 28.7/2.65, 28.3/3.35}
    	\draw[white,fill=white] (\x,\y) circle [radius=0.15];
    	\foreach \x\y in {18/2,27/2,27/3}
    	\draw (\x,\y) to[out=0, in=220] (\x+1.5, \y+0.5) to[out=40, in=180] (\x+3,\y+1);
    	\foreach \x in {9,18,27}
    	\draw[line width=0.2cm] (\x-.2,1)--(\x+3,1)--(\x+3,2.2); 
    	\draw[line width=0.2cm] (2.8,1)--(3.2,1);
    	\draw (-1,2.5) node {$d:$} (4.5,2.5) node {$\longmapsto$} (13.5,2.5) node {
    	$\scriptstyle{-\lambda^{-1}}
    	$} (22.5,2.5) node {
    	$\scriptstyle{+\lambda^{-2}}
    	$};
        \draw[snake=brace, mirror snake] (6,-.5)--(12,-.5) node[below,pos=0.5] {$\scriptsize{=0}$};
    	\end{tikzpicture}
    	\caption{Example:~${d^2}\colon W(4)_2\to W(4)_1$}
    	\label{fig: black box differential}
    \end{figure}
    The resulting diagrams can be simplified using the smoothing rules for diagrams with crossings described in Remark~\ref{remark-smoothing}.
    We leave it to the reader to make this description as precise as they wish, and note here that this is where the notion of \emph{braiding}, so often seen in homological stability arguments, fits into our set up.
\end{remark}

In~\cite{BoydHepworthStability} we showed the following analogue of the high-connectivity of the complex of injective words.
It was the main technical underpinning of our proof of homological stability for Temperley-Lieb algebras.

\begin{thm}[{\cite[Theorem E]{BoydHepworthStability}}]\label{theorem-acyclicity}
   ~$H_d(W(n))=0$ for~$d\leq n-2$.
\end{thm}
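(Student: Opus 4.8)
The plan is to induct on $n$, working throughout with the diagrammatic model for the chain modules supplied by Proposition~\ref{proposition-black-box}. The cases $n\leq 1$ are immediate: then $W(n)$ is concentrated in degrees $-1\leq i\leq n-1$ and one checks directly that it has no homology below the top degree. So fix $n\geq 2$ and suppose the result holds for all smaller values.

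By Proposition~\ref{proposition-black-box}, $W(n)_i$ is freely spanned by the planar diagrams on $n$ strands carrying a black box of size $n-i-1$ on the right, and by Definition~\ref{defn: W(n) and boundary maps} the differential $d^i=\sum_j(-1)^jd^i_j$ enlarges the black box by one strand and absorbs one of the $i+1$ free strands lying above it, having first braided that strand down to the bottom of the free region; this is the planar counterpart of deleting a letter from an injective word. The key structural point is that the differential modifies only the right-hand part of a diagram --- it right-multiplies by the elements $s_k=\lambda+\mu U_k$, which act near the box --- and in particular it can never create a cup on the left-hand boundary. So any feature of a diagram recorded purely on its left-hand boundary is preserved by $d^i$, and the idea is to filter $W(n)$ by the fate of the top left-hand dot, the one labelled $n$: either it lies on a cup on the left, joined to some dot $p$ which planarity forces to have the opposite parity to $n$, or it is a through-strand to the right. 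This yields a short exact sequence of chain complexes of $\tl_n(a)$-modules
\[
    0\longrightarrow A(n)\longrightarrow W(n)\longrightarrow B(n)\longrightarrow 0,
\]
where $A(n)$ --- spanned by the diagrams whose left-hand dot $n$ lies on a cup --- is a subcomplex, and $B(n)=W(n)/A(n)$ --- spanned by the through-strand diagrams --- is a genuine quotient complex because $d^i$ cannot create a left-hand cup.

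The heart of the argument is to compute the homology of $A(n)$ and of $B(n)$. Deleting the distinguished left-hand cup from the diagrams in $A(n)$, and deleting the distinguished left-hand through-strand (after reindexing the remaining strands) from the diagrams in $B(n)$, should present each of these complexes --- up to a degree shift and a twist by a power of $\lambda$ --- as a direct sum of complexes of a similar kind built on fewer than $n$ strands: complexes with chain modules again induced from trivial modules, and differentials again given by braided strand-absorptions. Assuming the inductive hypothesis (suitably formulated for this family) gives that every such summand has homology concentrated in its top degree, which in each case lies in degree $n-1$, the long exact sequence of the displayed short exact sequence forces $H_d(A(n))=H_d(B(n))=0$, and hence $H_d(W(n))=0$, for all $d\leq n-2$, completing the induction.

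I expect the main obstacle to be the bookkeeping in the previous paragraph. One must determine exactly which complexes occur in the associated graded of the filtration --- with what multiplicities, degree shifts and $\lambda$-twists --- and, most delicately, check that the induced differentials genuinely coincide with the strand-absorbing differentials of those smaller complexes, which requires tracking the braiding elements $s_k$, the constants $\lambda$ and $\mu$, and the varying black-box sizes all at once. In particular the pieces that arise are not literally the complexes $W(m)$ with $m<n$ but a two-parameter family of $W$-type complexes (in which the numbers of strands on the two sides of a diagram may differ), so the induction should be carried out for this larger family. Finally, the parity restriction on the admissible left-hand cups --- which is ultimately what produces the Fine numbers rather than the derangement numbers in the later Euler-characteristic computation --- is exactly what makes this combinatorial identification subtle rather than routine; a preliminary universal-coefficients argument, available since the chain modules are free over $\mathbb{Z}[v,v^{-1}]$, reduces the statement to the case $R=\mathbb{Z}[v,v^{-1}]$ if one prefers to work over a single ground ring.
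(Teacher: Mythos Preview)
The paper does not prove this theorem: it is quoted as Theorem~E of the companion paper~\cite{BoydHepworthStability} and cited without argument here. There is therefore no proof in the present paper to compare your proposal against.

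Your proposal is a strategy outline rather than a proof, and you say as much. The filtration by the fate of the top left-hand dot is natural, and your observation that the differential acts purely by right-multiplication---hence cannot create or destroy cups on the left boundary---is correct and does show that $A(n)$ is a genuine subcomplex. But the substantive step, identifying $A(n)$ and $B(n)$ with direct sums of smaller $W$-type complexes and running the strengthened induction over a two-parameter family, is not carried out, and you flag this yourself. One concrete difficulty you have not addressed: in $B(n)$ the through-strand from the top left dot lands on some right-hand dot, and that right endpoint is \emph{not} preserved by the differential (right-multiplication by $s_k=\lambda+\mu U_k$ can move it). So ``deleting the distinguished through-strand and reindexing'' is not a well-defined operation on $B(n)$ as a chain complex; different summands of $d^i(x)$ may have the through-strand ending at different right dots. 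You would need a more refined decomposition---formulating precisely the two-parameter family you allude to, and checking that the induced differentials really agree with those of the smaller complexes---before the inductive step could even be stated. As written this is a plan with an honest accounting of its gaps, not a proof.
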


The top homology of the Tits building is known as the \emph{Steinberg module}. {The rank of the top homology of~$W(n)$ is the~$n$-th Fine number~$F_n$, and} this inspires the name in the following definition.

\begin{defn}
    The \emph{$n$-th Fineberg module} is the~$\tl_n(a)$-module 
    \[\F_n(a) = H_{n-1}(W(n)).\]  We often suppress the~$a$ and simply write~$\F_n$.
\end{defn}

\section{Dyck paths, Catalan numbers, and Fine numbers}
\label{section-dyck-paths}

We now recall Dyck paths, Catalan numbers and Fine numbers.  
We also recall the familiar formula for the Catalan numbers and extend it to take the height of the first peak into account, leading to a new proof of Equation~\eqref{equation-alternating} from the introduction.

A \emph{Dyck path} is a path starting and ending on the horizontal axis, built using the steps~$(1,1)$ and~$(1,-1)$, and never falling below the horizontal axis.  
We abbreviate the steps~$(1,1)$ and~$(1,-1)$ by~$u$ and~$d$ respectively.
Thus Dyck paths are in correspondence with \emph{Dyck words}, i.e.~words in the letters~$u$ and~$d$ containing equal numbers of~$u$s and~$d$s, and such that no initial segment contains more~$d$s than~$u$s.
The following figure shows a Dyck path and its corresponding Dyck word.
\[
    \begin{tikzpicture}[scale=0.5,baseline=(base)]
        \coordinate (base) at (0,1.5);
        \draw[fill=black]
            (0,0) circle (0.1)
            --(1,1)circle (0.1)
            --(2,2)circle (0.1)
            --(3,1)circle (0.1)
            --(4,2)circle (0.1)
            --(5,3)circle (0.1)
            --(6,2)circle (0.1)
            --(7,1)circle (0.1)
            --(8,0)circle (0.1);
    \end{tikzpicture}
    \qquad\qquad
    uuduuddd
\]
The~$n$-th \emph{Catalan number}~$C_n$ is the number of Dyck paths of length~$2n$.  
For example,~$C_3=5$:
\begin{gather*}
    \begin{tikzpicture}[scale=0.5,baseline=(base)]
        \coordinate (base) at (0,1.5);
        \draw[fill=black]
            (0,0) circle (0.1)
            --(1,1)circle (0.1)
            --(2,0)circle (0.1)
            --(3,1)circle (0.1)
            --(4,0)circle (0.1)
            --(5,1)circle (0.1)
            --(6,0)circle (0.1);
    \end{tikzpicture}
    \qquad
    \begin{tikzpicture}[scale=0.5,baseline=(base)]
        \coordinate (base) at (0,1.5);
        \draw[fill=black]
            (0,0) circle (0.1)
            --(1,1)circle (0.1)
            --(2,0)circle (0.1)
            --(3,1)circle (0.1)
            --(4,2)circle (0.1)
            --(5,1)circle (0.1)
            --(6,0)circle (0.1);
    \end{tikzpicture}
    \\
    \begin{tikzpicture}[scale=0.5,baseline=(base)]
        \coordinate (base) at (0,1.5);
        \draw[fill=black]
            (0,0) circle (0.1)
            --(1,1)circle (0.1)
            --(2,2)circle (0.1)
            --(3,1)circle (0.1)
            --(4,0)circle (0.1)
            --(5,1)circle (0.1)
            --(6,0)circle (0.1);
    \end{tikzpicture}
    \qquad
    \begin{tikzpicture}[scale=0.5,baseline=(base)]
        \coordinate (base) at (0,1.5);
        \draw[fill=black]
            (0,0) circle (0.1)
            --(1,1)circle (0.1)
            --(2,2)circle (0.1)
            --(3,1)circle (0.1)
            --(4,2)circle (0.1)
            --(5,1)circle (0.1)
            --(6,0)circle (0.1);
    \end{tikzpicture}
    \qquad
    \begin{tikzpicture}[scale=0.5,baseline=(base)]
        \coordinate (base) at (0,1.5);
        \draw[fill=black]
            (0,0) circle (0.1)
            --(1,1)circle (0.1)
            --(2,2)circle (0.1)
            --(3,3)circle (0.1)
            --(4,2)circle (0.1)
            --(5,1)circle (0.1)
            --(6,0)circle (0.1);
        \node () at (3,4) {}; 
    \end{tikzpicture}
\end{gather*}
See Corollary~6.2 of Stanley~\cite{StanleyEnumerativeTwo}, and the paragraphs before and after it, for a discussion of the Catalan numbers.
{A \emph{peak} in a Dyck path is a pair of two consecutive steps, the first up and the second down. The \emph{height} of a peak is the $y$-coordinate of the endpoint of the up step.}
The~$n$-th \emph{Fine number}~$F_n$ \cite{FineOEIS} is the number of Dyck paths of length~$2n$ in which the first peak has even height.
For example,~$F_3=2$ as the previous set of diagrams demonstrates.
See Deutsch and Shapiro~\cite{DeutschShapiro} for a nice discussion of the Fine numbers.

\begin{proposition}\label{proposition-binomial}
    The number of Dyck paths of length~$2n$ whose first peak has height~$m$ or greater is
    \[
        \binom{2n-m}{n-m}-\binom{2n-m}{n-m-1}
        =
        \frac{m+1}{n+1}\binom{2n-m}{n}
    \]
    In particular, taking~$m=0$ gives the familiar result
    \[
        C_n = \binom{2n}{n}-\binom{2n}{n+1}=\frac{1}{n+1}\binom{2n}{n}.
    \]
\end{proposition}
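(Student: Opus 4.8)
The plan is to count Dyck paths of length $2n$ whose first peak has height exactly $k$, and then sum over $k\geq m$. A Dyck path whose first peak has height exactly $k$ must begin with exactly $k$ up-steps $u$ followed by one down-step $d$ (this is the first peak, of height $k$); after this initial segment $u^k d$ the path sits at height $k-1$ and continues as an arbitrary path built from $u$ and $d$ that stays at height $\geq 0$ and returns to $0$, i.e.\ a path from height $k-1$ to height $0$ of length $2n - (k+1)$ never going below the axis. So I first record the standard reflection-principle count: the number of lattice paths from $(0,k-1)$ to $(2n-k-1, 0)$ using steps $u,d$ that never go below $0$ equals $\binom{2n-k-1}{n-1} - \binom{2n-k-1}{n}$ (here $2n-k-1$ is the number of steps, of which $n-k$ must be up and $n-1$ must be down; the subtracted term counts the bad paths via reflection in the line $y=-1$). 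This is the one genuinely computational input, and it is entirely classical (ballot problem / Catalan reflection argument), so I would cite Stanley or simply state it.

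Next I would sum over $k$ from $m$ to $n$. Writing $f(k) = \binom{2n-k-1}{n-1} - \binom{2n-k-1}{n}$ for the count of paths whose first peak has height exactly $k$, I claim the sum $\sum_{k\geq m} f(k)$ telescopes. Indeed, using the Pascal-triangle recurrence I expect each $f(k)$ to be expressible as a difference $g(k) - g(k+1)$ for a suitable $g$; concretely $\binom{2n-k-1}{n-1} - \binom{2n-k-1}{n}$ should match $\left[\binom{2n-k}{n-1} - \binom{2n-k}{n}\right] - \left[\binom{2n-k-1}{n-1} - \binom{2n-k-1}{n-1}\right]$-type manipulations, after which $\sum_{k=m}^{n} f(k) = g(m)$ with $g(m) = \binom{2n-m}{n-m} - \binom{2n-m}{n-m-1}$. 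I would verify this by a short induction on decreasing $m$, or equivalently by checking the single Pascal identity $g(m) - g(m+1) = f(m)$, which reduces to $\binom{2n-m}{n-m} - \binom{2n-m-1}{n-m} = \binom{2n-m-1}{n-m-1}$ and its partner for the second binomial term --- both instances of $\binom{N}{r} = \binom{N-1}{r} + \binom{N-1}{r-1}$.

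Finally, for the closed-form reformulation $\binom{2n-m}{n-m} - \binom{2n-m}{n-m-1} = \frac{m+1}{n+1}\binom{2n-m}{n}$, I would simply expand both binomial coefficients on the left in factorials over a common denominator: $\binom{2n-m}{n-m} = \frac{(2n-m)!}{(n-m)!\,n!}$ and $\binom{2n-m}{n-m-1} = \frac{(2n-m)!}{(n-m-1)!\,(n+1)!}$, factor out $\frac{(2n-m)!}{(n-m)!\,(n+1)!}$, and collect the remaining factor $(n+1) - (n-m) = m+1$; noting $\frac{(2n-m)!}{(n-m)!\,(n+1)!}\cdot(m+1)$ can be rewritten as $\frac{m+1}{n+1}\cdot\frac{(2n-m)!}{(n-m)!\,n!}$... wait, that is $\frac{m+1}{n+1}\binom{2n-m}{n-m}$, not $\binom{2n-m}{n}$; but $\binom{2n-m}{n-m} = \binom{2n-m}{n}$ since $(2n-m)-(n-m) = n$, so the two forms agree. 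The $m=0$ specialization is then immediate. The main obstacle, such as it is, is getting the indices in the reflection count exactly right (off-by-one errors in "height $k-1$ after the initial $u^k d$" and in the number of remaining steps $2n-k-1$); everything after that is bookkeeping with Pascal's rule.
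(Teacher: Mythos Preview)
Your argument is correct, but it takes a longer route than the paper's. The paper observes that a Dyck path has first peak of height~$\geq m$ if and only if it begins with at least~$m$ up-steps, and then applies the reflection trick \emph{once} to the set of all lattice paths from~$(0,0)$ to~$(2n,0)$ that begin with~$m$ up-steps: there are~$\binom{2n-m}{n-m}$ such paths in total (distribute the remaining~$n-m$ up-moves among the last~$2n-m$ steps), and the bad ones are in bijection, via reflection in~$y=-1$, with paths from~$(0,0)$ to~$(2n,-2)$ beginning with~$m$ up-steps, of which there are~$\binom{2n-m}{n-m-1}$. This yields the formula directly with no summation.

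By contrast, you stratify by the \emph{exact} height~$k$ of the first peak, apply reflection separately to each stratum to get~$f(k)=\binom{2n-k-1}{n-1}-\binom{2n-k-1}{n}$, and then telescope~$\sum_{k\geq m}f(k)$ via Pascal's identity. This works (your telescoping claim~$g(m)-g(m+1)=f(m)$ is correct, and~$g(n+1)=0$), but it is a detour: you have effectively re-derived by summation what the paper gets in one stroke by noting that ``first peak~$\geq m$'' and ``starts with~$u^m$'' are the same condition and that reflection respects it. Your approach does buy you the exact-height counts~$f(k)$ as a byproduct, which the paper does not compute; conversely, the paper's approach makes clearer why the answer has the same shape as the Catalan formula.
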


This result can be proved using the \emph{reflection trick} commonly attributed to Andr\'e (see Renault's paper~\cite{Renault}), and so is likely to be well-known to combinatorialists.  We include a proof here for completeness and clarity, but we note that it can be extracted from the Ballot Problem presented on page 359 of~\cite{Renault} in the case $a=n+1$, $b=n-m$. 

\begin{proof}
    We begin by recalling the proof of the case~$m=0$ by the `reflection trick'.
    See Lemma~5.27 of~\cite{KasselTuraev} {or \cite{Renault}}.
    We will then adapt this to the general case.
    
    Consider the set of all paths built from the steps~$(1,1)$ and~$(1,-1)$, starting at~$(0,0)$ and ending at~$(2n,0)$.
    The Dyck paths are those that do not go below the~$x$-axis, and the rest we call \emph{bad} paths.
    Given a bad path, we locate the first point at which it meets the line~$y=-1$, and reflect the remainder of the path through that line.
    The result is a path from~$(0,0)$ to~$(2n,-2)$.
    Indeed, this establishes a bijection between the set of bad paths, and the set of paths from~$(0,0)$ to~$(2n,-2)$.
    A path from~$(0,0)$ to~$(2n,0)$ has~$n$ ups and~$n$ downs, so that there are~$\binom{2n}{n}$ in total.
    A path from~$(0,0)$ to~$(2n,-2)$ has~$n-1$ ups and~$n+1$ downs, so there are~$\binom{2n}{n+1}$ in total.
    Therefore the total number of Dyck paths ($C_n$) is~$\binom{2n}{n}-\binom{2n}{n+1}$.

    For general~$m$ we now repeat the procedure, but only consider paths that begin with at least~$m$ up steps.
    Then the number of paths from~$(0,0)$ to~$(2n,0)$ is 
   ~$\binom{2n-m}{n-m}$, and the number from~$(0,0)$ to~$(2n,-2)$ is ~$\binom{2n-m}{n-m-1}$, as we see by considering the distribution of the up moves \emph{after} the first~$m$.
\end{proof}

Now let us fix~$n$.
Given~$0\leq m$, we write~$B_m$ for the  number of Dyck paths whose first peak occurs at height~$m$ or greater.
Thus~$B_m=0$ for~$m>n$.
Then~$(B_m-B_{m+1})$ is the number of Dyck paths whose first peak has height exactly~$m$, and so the Fine number~$F_n$ is nothing other than
\[
    F_n
    =
    (B_0-B_1)+(B_2-B_3)+\cdots
    =
    \sum_{m=0}^n(-1)^m B_m.
\]
In particular, using Proposition~\ref{proposition-binomial} above we recover the formula in Equation~\eqref{equation-alternating}:
\begin{align*}
    F_n
    &=
    \sum_{m=0}^n(-1)^m \frac{m+1}{n+1}\binom{2n-m}{n}
    \\
    &=
    \frac{1}{n+1}\left[ \binom{2n}{n} - 2\binom{2n-1}{n} +3 \binom{2n-2}{n}- \cdots+(-1)^n(n+1)\binom{n}{n} \right]
\end{align*}

\section{Planar diagrams and Dyck paths}
\label{section-planar-dyck}

We now recall the familiar relationship between planar diagrams and Dyck paths, and we extend it to take the height of the first peak into account.

\begin{prop}\label{prop:rank is catalan}
    The set of planar diagrams on~$n$ strands is in bijection with the set of Dyck paths (or words) of length~$2n$.
\end{prop}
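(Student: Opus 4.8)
The plan is to exhibit an explicit bijection between planar diagrams on $n$ strands and Dyck words of length $2n$, by reading off the left-hand endpoints of arcs from bottom to top.

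First I would set up the encoding. Given a planar diagram on $n$ strands, consider all $2n$ dots — the $n$ dots on the left vertical line and the $n$ dots on the right — and label each dot with the letter $u$ if it is the \emph{lower} endpoint of the arc attached to it, and $d$ if it is the \emph{upper} endpoint; here "lower/upper" is with respect to the natural cyclic-free reading order on the boundary of the rectangle, namely: left dots $1,\dots,n$ from bottom to top, then right dots $n,\dots,1$ from top to bottom. Reading the letters off in this order produces a word $w$ in $u$ and $d$ of length $2n$. Since each arc contributes exactly one $u$ and one $d$, the word has $n$ of each letter; and since an arc cannot close before it has opened, every initial segment of $w$ has at least as many $u$'s as $d$'s — so $w$ is a Dyck word. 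This gives a well-defined map from planar diagrams to Dyck words of length $2n$.

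Next I would construct the inverse. Given a Dyck word $w$ of length $2n$, place $2n$ boundary points in the same reading order, and use the standard "bracket matching": each $d$ is joined to the most recent unmatched $u$ preceding it. The nesting property of Dyck words guarantees that the resulting system of $n$ arcs can be drawn disjointly inside the rectangle — this is the familiar fact that a balanced parenthesisation corresponds to a non-crossing matching — and it is planar. One then checks that the two constructions are mutually inverse: applying the matching procedure to the word read off from a diagram recovers the diagram up to isotopy (the non-crossing matching on a fixed set of boundary points is determined by which pairs are joined), and reading the word off from the matched diagram recovers $w$.

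The only genuine subtlety — and the step I expect to be the main obstacle — is bookkeeping with the \emph{reading order} of the boundary dots, in particular the convention that right-hand dots are read from top to bottom (so that the boundary is traversed as a single interval and the planar/non-crossing condition translates exactly into the Dyck condition). Once that convention is pinned down, everything else is routine: the equivalence of non-crossing perfect matchings of $2n$ points on a line with Dyck words of length $2n$ is classical (see e.g.\ Stanley~\cite{StanleyEnumerativeTwo}), and isotopy of planar diagrams corresponds precisely to equality of the underlying non-crossing matching. I would present the bijection with a small worked example (reusing the $n=5$ diagram already in the paper) rather than belabouring the formal verification.
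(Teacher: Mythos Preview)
Your argument is correct and is essentially the same construction as the paper's: both encode a planar diagram as a Dyck word by walking through the $2n$ boundary dots in a fixed linear order and recording $u$ at the first encounter of an arc and $d$ at the second, with the inverse given by the standard bracket-matching for non-crossing pairings. The one substantive difference is the reading order. The paper starts with the \emph{right}-hand dots $1,\ldots,n$ (bottom to top) and then the left-hand dots $n,\ldots,1$ (top to bottom), whereas you start on the left. Both choices yield valid bijections for this proposition, but the paper's convention is the one needed in Proposition~\ref{lemma - rank tensor as number of D paths}: there, ``no cups among dots $1,\ldots,m$ on the right'' must translate into ``the Dyck word begins with $m$ up-steps'', i.e.\ first peak of height $\geq m$. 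Under your left-first convention this would instead become a condition on the \emph{end} of the word, so for compatibility with the rest of the paper you should adopt the right-first order.
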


\begin{corollary}
    The rank of~$\tl_n(a)$ as an~$R$-module is the Catalan number~$C_n$.
\end{corollary}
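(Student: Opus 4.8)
The plan is to construct an explicit bijection between planar diagrams on $n$ strands and Dyck words of length $2n$, by "reading off" the arc structure of a diagram as one traverses its boundary. First I would lay out the $2n$ endpoints of a planar diagram in a single linear order: starting at the bottom dot on the left side, proceeding up the left vertical line through dots $1, \ldots, n$, then across to the top dot on the right side and down the right vertical line through dots $n, \ldots, 1$. This gives a cyclic-free sequence of $2n$ marked points, each of which is one endpoint of exactly one arc (since the $n$ arcs are disjoint and use all $2n$ dots). Because the arcs are disjoint and confined to the strip between the two vertical lines --- i.e. to a disc whose boundary is exactly the curve just described --- the arcs form a non-crossing perfect matching of these $2n$ points on the boundary of a disc.

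The second step is the standard dictionary between non-crossing perfect matchings of $2n$ points on a line (or circle) and Dyck words: reading the $2n$ points in order, assign the letter $u$ to a point that is the \emph{first} (left) endpoint of its arc, and the letter $d$ to a point that is the \emph{second} (right) endpoint. I would verify the two defining properties of a Dyck word: there are equally many $u$s and $d$s (one of each per arc, $n$ arcs total), and no initial segment has more $d$s than $u$s (an arc's right endpoint can only be read after its left endpoint, so at every prefix the number of closed arcs cannot exceed the number of opened ones). Conversely, given a Dyck word, the usual bracket-matching procedure pairs each $d$ with the most recent unmatched $u$; non-crossing-ness of the resulting matching is exactly the well-nestedness of matched brackets, and a non-crossing matching of the $2n$ boundary points of a disc can be realized by disjoint arcs inside the disc, uniquely up to isotopy. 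These two assignments are mutually inverse, giving the bijection.

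The main obstacle --- really the only non-formal point --- is justifying that a planar diagram is \emph{exactly} the data of a non-crossing perfect matching of the $2n$ boundary points, with no extra information and no isotopy ambiguity: that is, that two planar diagrams are isotopic if and only if they induce the same matching. This is where one uses that arcs lie in the contractible region between the two lines and are taken up to isotopy rel endpoints; it is a standard fact about disc diagrams but deserves an explicit sentence. Once that identification is in hand, the rest is the classical non-crossing-matching/Dyck-word correspondence. Finally, the Corollary is immediate: $\tl_n(a)$ is by Definition~\ref{definition-temperley-lieb} the free $R$-module on the set of planar diagrams on $n$ strands, which by the Proposition is in bijection with the Dyck paths of length $2n$, and the number of these is by definition the Catalan number $C_n$.
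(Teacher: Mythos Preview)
Your argument is correct and follows essentially the same approach as the paper: encode a planar diagram as a Dyck word by traversing the $2n$ boundary dots in a fixed linear order and writing $u$ or $d$ according to whether an arc is encountered for the first or second time, then deduce the Corollary from the fact that $\tl_n(a)$ is free on planar diagrams. The one difference is your choice of traversal order: you start at the bottom-left and go up the left side then down the right, whereas the paper starts at the bottom-right and goes up the right side then down the left. Both choices yield valid bijections (the paper explicitly remarks that there are several), so for the Corollary your version is just as good; however, the paper's specific ordering is chosen so that the first $m$ letters of the Dyck word correspond to dots $1,\ldots,m$ on the \emph{right}, which is exactly what is needed in the next proposition to match ``no cups in the black box'' with ``first peak of height at least $m$''. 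With your ordering that later argument would need to be mirrored accordingly.
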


There are several choices for such a bijection; the one that is relevant to us is as follows:
Take a planar diagram on~$n$ strands, and work through the dots in order, starting with~$1,\ldots, n$ on the right, followed by~$n,\ldots ,1$ on the left.  
At each dot we encounter an arc, either for the first time or for the second time: if it is the first time, record a~$u$, and if it is the second time, record a~$d$.  
For example, here is a planar diagram, the corresponding Dyck word, and the corresponding Dyck path.
\[
    	\begin{tikzpicture}[scale=0.5,baseline=(base)]
    	    \coordinate (base) at (1,1);
        	\foreach \x in {1,2,3,4}
        	\foreach \y in {3,6}
        	\draw[fill=black,line width=1]  (\y,\x) circle [radius=0.15]
        	(\y,.5)--(\y,4.5);
        	\draw[black] (3,1) --(6,1);
        	\draw (3,2) to[out=0,in=-90] (4,2.5) to[out=90,in=0] (3,3);
        	\draw (6,3) to[out=180,in=-90] (5,3.5) to[out=90,in=180] (6,4);
        	\draw (3,4) to[out=0, in=120] (4.5, 3) to[out=300, in=180] (6,2);
    	\end{tikzpicture}
    	\qquad\qquad
    	uuuddudd
    	\qquad\qquad
    \begin{tikzpicture}[scale=0.5,baseline=(base)]
        \coordinate (base) at (0,0);
        \draw[fill=black]
            (0,0) circle (0.1)
            --(1,1)circle (0.1)
            --(2,2)circle (0.1)
            --(3,3)circle (0.1)
            --(4,2)circle (0.1)
            --(5,1)circle (0.1)
            --(6,2)circle (0.1)
            --(7,1)circle (0.1)
            --(8,0)circle (0.1);
    \end{tikzpicture}
\]
See \cite[pp.966-967]{RidoutStAubin} or~\cite[Lemma~5.33]{KasselTuraev} for details.

\begin{proposition}\label{lemma - rank tensor as number of D paths}
    The rank of~$\tl_n(a)\otimes_{\tl_{m}(a)}\t$ is equal to the number of Dyck paths of length~$2n$ whose first peak occurs at height~$m$ or greater.
\end{proposition}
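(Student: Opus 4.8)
The plan is to combine Proposition~\ref{proposition-black-box} with the planar diagram/Dyck word bijection recalled just before the statement, and then to translate the ``black box'' condition into a condition on the first peak of the corresponding Dyck path.

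First I would use Proposition~\ref{proposition-black-box} to identify $\tl_n(a)\otimes_{\tl_m(a)}\t$ with the module of planar diagrams on $n$ strands with black box of size $m$. That module is free, with basis the planar diagrams on $n$ strands having no arc with both endpoints among the dots $1,\ldots,m$ on the right. (By planarity this is the same condition as having no \emph{cup} joining adjacent dots inside the box: a nested arc over dots $1,\ldots,m$ on the right always contains an innermost adjacent arc there.) So the rank in question is exactly the number of such diagrams, and it remains to count them using the bijection of Proposition~\ref{prop:rank is catalan}.

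Recall that this bijection reads the dots of a planar diagram in the order $1,\ldots,n$ on the right and then $n,\ldots,1$ on the left, recording $u$ when an arc is met for the first time and $d$ when it is met for the second. I claim that a planar diagram has black box of size $m$ if and only if the first $m$ letters of its Dyck word are all $u$'s. Indeed, as we read the right-hand dots $1,\ldots,m$ in order, the letter recorded at a dot $i\leq m$ is $d$ precisely when the arc at $i$ also meets a right-hand dot $j<i$, which is automatically $\leq m$; hence the first $m$ letters are all $u$'s exactly when no arc has both its endpoints among the right-hand dots $1,\ldots,m$, which is the black box condition.

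Finally I would observe that a Dyck word begins with $m$ letters $u$ if and only if the corresponding Dyck path begins with $m$ up-steps, and this holds if and only if the first peak of the path has height at least $m$: the first peak is the endpoint of the initial maximal run of up-steps, and its height equals the length of that run. Stringing the three identifications together gives the proposition. I do not expect a genuine obstacle here; the only point requiring a little care is the middle step, where one must check that the combinatorial condition defining the black box (Proposition~\ref{proposition-black-box}) corresponds, under the ordering of the dots used in the bijection, precisely to an initial block of $u$'s in the Dyck word.
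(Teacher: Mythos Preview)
Your proposal is correct and follows essentially the same route as the paper's proof: invoke Proposition~\ref{proposition-black-box} to get the black-box basis, observe that ``no cup inside the box'' is equivalent to ``no arc with both endpoints among dots $1,\ldots,m$ on the right'', and then use the Dyck word bijection to translate this into the condition that the word begins with $m$ letters $u$, i.e.\ the first peak has height at least $m$. Your write-up is a bit more explicit than the paper's (in particular the parenthetical about innermost cups and the careful justification of the middle step), but the argument is the same.
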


\begin{proof}
    Proposition~\ref{proposition-black-box} shows that~$\tl_n\otimes_{\tl_m}\t$ has basis given by the~$n$-planar diagrams with black box of size~$m$, i.e.~the diagrams that have no cups among dots~$1,\ldots,m$ on the right.
    These are precisely the diagrams which have no arcs that start and end among dots~$1,\ldots,m$ on the right.
    Therefore, under the bijection between planar diagrams and Dyck paths, these diagrams correspond exactly to the paths that start with~$m$ up steps, i.e.~the paths whose first peak has height~$m$ or greater.
\end{proof}

We are now in a position to prove Theorem~\ref{theorem-euler}, which states that the Euler characteristic of~$W(n)$ is~$(-1)^{n-1}F_n$,
where~$F_n$ is the~$n$-th Fine number.

\begin{proof}[Proof of Theorem~\ref{theorem-euler}]
Let us fix~$n$ and define~$B_m$ to be the number of Dyck paths of length~$2n$ whose first peak occurs at height~$m$ or greater, so that~$\rk(\tl_n\ootimes{m}\t) = B_{m}$.
Let us write~$A_m$ for the number of Dyck paths of length~$2n$ whose first peak occurs at height exactly~$m$. 
Then
\begin{align*}
    \chi(W(n))
    &=
    -\rk(\tl_n\ootimes{n}\t)
    +\rk(\tl_n\ootimes{n-1}\t)
    -\rk(\tl_n\ootimes{n-2}\t)
    +
    \cdots\\
    &\cdots
    +(-1)^{n-2}\rk(\tl_n\ootimes{1}\t)
    +(-1)^{n-1}\rk(\tl_n\ootimes{0}\t)
    \\
    &=
    -B_n
    +B_{n-1}
    -B_{n-2}
    +
    \cdots
    +(-1)^{n-1}B_0
    \\
    &=
    (-1)^{n-1}[
    (B_0-B_1)
    +
    (B_2-B_3)
    +
    \cdots]
\end{align*}
with final term in the bracket either~$B_n$ if~$n$ is even, or~$(B_{n-1}-B_n)$ if~$n$ is odd.  But this is precisely~$(-1)^{n-1}[A_0 + A_2 + A_4 + \cdots + A_n]$ if~$n$ is even, and $(-1)^{n-1}[A_0 + A_2 + A_4 + \cdots + A_{n-1}]$ if~$n$ is odd.  In either case, we obtain~$(-1)^{n-1}F_n$.
\end{proof}

Combined with Proposition~\ref{proposition-binomial}, the proof above gives us Equation~\eqref{equation-alternating}:
\begin{align*}
    F_n
    &=  (-1)^{n-1}\chi(W(n))
    \\
    &= B_0-B_1+\cdots+(-1)^nB_n
    \\
    &= \frac{1}{n+1}\left[\binom{2n}{n}-2\binom{2n-1}{n}+3\binom{2n-2}{n}-\cdots+(-1)^n(n+1)\binom{n}{n}\right]
\end{align*}

We also obtain the representation-theoretic analogue Theorem~\ref{corollary-alternating}.

\begin{proof}[Proof of Theorem~\ref{corollary-alternating}]
	
{Since~$R$ is Noetherian, and~$\tl_n$ is a finitely generated~$R$-algebra, it follows that $\tl_n$ is itself Noetherian, and we can employ Proposition~\ref{prop-G0}, to obtain:}
\begin{align*}
    [\F_n] 
    &= (-1)^{n-1} \sum_{d=-1}^{n-1}(-1)^d[H_d(W(n))]
    \\
    &= (-1)^{n-1}\sum_{d=-1}^{n-1}(-1)^d[W(n)_d]
    \\
    &= (-1)^{n-1}\sum_{d=-1}^{n-1}(-1)^d[\t\uparrow_{\tl_{n-d-1}}^{\tl_n}]
    \\
    &= \sum_{m=0}^{n}(-1)^{m}[\t\uparrow_{\tl_{m}}^{\tl_n}].
\end{align*}
Here the first equation is a consequence of Theorem~\ref{theorem-acyclicity}, the second is an instance of~\eqref{equation-Kzero} in Proposition~\ref{prop-G0}, and the third follows from the definition
\[W(n)_d=\tl_n\otimes_{\tl_{n-d-1}}\t=\t\uparrow^{\tl_n}_{\tl_{n-d-1}}.\qedhere\]

\end{proof}

\section{Young tableaux}
\label{section-young-tableaux}

In this section we will describe the top-dimensional homology~$\F_n=H_{n-1}(W(n))$ as a module over~$\tl_n$ when our ground ring~$R$ is the complex numbers and the algebra~$\tl_n$ is semisimple.   In this case the irreducible representations of~$\tl_n$ are indexed by certain Young diagrams, and we are able to identify the multiplicity of each irreducible in~$\F_n$.  A nice account of the theory used here is given in chapters 4 and 5 of Kassel and Turaev~\cite{KasselTuraev}, see also the brief account in section~11 of Jones' paper~\cite{JonesAnnals}.  In particular we will use the language of partitions, Young diagrams and Young tableaux, for which one can refer to Sections~5.1 and 5.2 of~\cite{KasselTuraev}. 

For this section we will fix~$n\geq 1$ and assume that our ground ring~$R$ is the field of complex numbers~$\C$, that~$v$ and~$a=v+v^{-1}$ are non-zero complex numbers, and that~$q=v^2$ is not a~$d$-th root of unity for~$2\leq d\leq n$.
The latter condition guarantees that~$\tl_n(a)$ is semisimple.
We also assume that~$(\lambda,\mu)=(-1,v)$ in order to accord with the conventions of~\cite{KasselTuraev}.

Under these assumptions, the Temperley-Lieb algebra~$\tl_p(a)$ is semisimple for each~$0\leq p\leq n$, with one irreducible representation~$V_\lambda$ for each partition~$\lambda\vdash p$ whose Young diagram has at most two columns.  The representation corresponding to the partition~$1^p=(1,\ldots,1)\vdash p$, whose Young diagram is a single column of~$p$ boxes, is
\[
    V_{1^p}=\t.
\]
The operations of restriction and induction on the modules~$V_\lambda$ are now determined by the rules
\begin{align*}
    V_\lambda\downarrow^{\tl_p}_{\tl_{p-1}}
    &\cong
    \bigoplus_{\mu\hookrightarrow\lambda} V_\mu,
    \qquad \lambda\vdash p
    \\
    V_\lambda\uparrow^{\tl_{p}}_{\tl_{p-1}}
    &\cong
    \bigoplus_{\lambda\hookrightarrow\mu} V_\mu,
    \qquad\lambda\vdash (p-1)
\end{align*}
where all~$\lambda$ and~$\mu$ are assumed to have diagrams with at most two columns.  Recall that the notation~$\mu\hookrightarrow\lambda$ means that the diagram of~$\mu$ is obtained from that of~$\lambda$ by deleting a single corner box, and that~$\lambda\hookrightarrow\mu$ means that the diagram of~$\mu$ is obtained from that of~$\lambda$ by adding a single corner box.
See the next remark for references to proofs of the facts recalled here.

\begin{remark}[References for the representation theory of Temperley-Lieb algebras]\label{remark-rep-theory}
	With the assumptions from the start of the section, Theorem~2.2 of~\cite{Wenzl} shows that the Iwahori-Hecke algebra~$\H_n(q)$ is semisimple.  Theorem~5.18 of~\cite{KasselTuraev} shows that the distinct irreducible modules of~$\H_n(q)$ are the modules~$V_\lambda$, one for each partition~$\lambda\vdash n$, with no restriction on the shape.  Section 5.7.3 of~\cite{KasselTuraev} then shows that~$\tl_n(a)$ is semisimple, with one irreducible representation~$V_\lambda$ for each partition~$\lambda\vdash n$ of~$n$ whose Young diagram has at most two columns, and that these~$V_\lambda$ pull back to the representations of~$\H_n(q)$ with the same names.  The fact that~$V_{1^n}=\t$ can be seen by comparing Examples~5.12(b) and Theorem~5.29 of~\cite{KasselTuraev}.  The rules for induction and restriction of the representations~$V_\lambda$ of~$\H_n(q)$ are
	\begin{align*}
	V_\lambda\downarrow^{\H_n(q)}_{\H_{n-1}(q)}
	&\cong
	\bigoplus_{\mu\hookrightarrow\lambda} V_\mu,
	\qquad \lambda\vdash n,
	\\
	V_\lambda\uparrow^{\H_{n}(q)}_{\H_{n-1}(q)}
	&\cong
	\bigoplus_{\lambda\hookrightarrow\mu} V_\mu,
	\qquad \lambda\vdash (n-1),
	\end{align*}
	again with no restriction on the shape of~$\lambda$ and~$\mu$.  The first of these rules is Proposition~5.13 of~\cite{KasselTuraev}, and the second follows by Frobenius reciprocity.  From the first of these we can immediately deduce the stated rule for restriction in the Temperley-Lieb case, and the rule for induction then follows, again by Frobenius reciprocity.
	The assumptions on~$n$ stated at the start of this section imply the analogous assumptions for all~$p$ in the range~$0\leq p\leq n$, so that we can replace~$n$ with any such~$p$ throughout this remark.
\end{remark}

\begin{proof}[Proof of Theorem~\ref{theorem-SYT}]
    For this proof, all partitions are assumed to have diagrams with at most two columns.

    To prove the claim it suffices to identify the isomorphism class~$[\F_n]$ within the ring of isomorphism classes of~$\tl_n$-modules.  We have 
    \[
        (-1)^{n-1}[\F_n]
        =
        \sum_{j=-1}^{n-1}(-1)^j[W(n)_j].
    \] 
    {Then since }
    \[
        W(n)_j 
        = 
        \tl_n\otimes_{\tl_{n-j-1}}\t
        =
        V_{1^{n-j-1}}\uparrow_{\tl_{n-j-1}}^{\tl_n}
    \]
   {altogether we have}
    \[
        [\F_n]
        =
        (-1)^{n-1}\sum_{j=-1}^{n-1}(-1)^j[V_{1^{n-j-1}}\uparrow_{\tl_{n-j-1}}^{\tl_n}].
    \]

    We now identify the induced modules appearing above.  Given~$\lambda\vdash n$ and~$p\leq n$, let~$N_{\lambda,p}$ denote the number of SYT for which the labels in the first column begin, starting from the top, with~$1,\ldots, p$. Observe that if~$i$ is in the range~$1\leq i\leq n$, then~$N_{\lambda,i-1}-N_{\lambda,i}$ is precisely the number of SYT of shape~$\lambda$ whose second column has top entry~$i$, and whose first column  necessarily has top entries~$1,\ldots, i-1$ {($N_{\lambda,0}=0$ by convention)}.
    
    We claim:
    \[
        V_{1^p}\uparrow_{\tl_p}^{\tl_n}
        \cong
        \bigoplus_{\lambda\vdash n}V_\lambda^{\oplus N_{\lambda,p}}
    \]
    To see this, we induce~$V_{1^p}$ up to~$\tl_{p+1}$, then~$\tl_{p+2}$, and so on.  At each stage, the module will be a direct sum of modules~$V_\lambda$ for various~$\lambda$, and we will consider each irreducible summand~$V_\lambda$ to be labelled by an SYT of the relevant shape~$\lambda$, according to the following rules.  At the initial step, the single summand~$V_{1^p}$ is labelled by the unique SYT of shape~$1^p$, which is a single column with labels~$1,\ldots,p$. As one passes from one step to the next, we interpret the rule 
    \[
        V_\lambda\uparrow^{\tl_{m+1}}_{\tl_m}
        \cong
        \bigoplus_{\lambda\hookrightarrow\mu} V_\mu
    \]
    as saying that when we induce up the module labelled by an SYT~$Q$, we obtain the sum of the two modules labelled by the SYT obtained from~$Q$ by adding a single box containing~$(m+1)$.  Beginning with the unique SYT of shape~$1^p$, and adding single boxes labelled~$p+1,p+2,\ldots,n$ so that one has an SYT at each stage, produces precisely one copy of each SYT with~$n$ boxes whose first column starts~$1,\ldots,p$.
    Compare with Exercise~5 on p.93 of~\cite{Fulton}.
    This proves the claim.

    The claim above gives us
    \begin{align*}
        [\F_n]
        &=
        (-1)^{n-1}\sum_{j=-1}^{n-1}(-1)^j[V_{1^{n-j-1}}\uparrow_{\tl_{n-j-1}}^{\tl_n}]
        \\
        &=
        (-1)^{n-1}\sum_{j=-1}^{n-1}(-1)^j\sum_{\lambda\vdash n} N_{\lambda,n-j-1}[V_\lambda]
        \\
        &=
        \sum_{\lambda\vdash n}\left[ \sum_{k=0}^{n}(-1)^{k} N_{\lambda,k}\right] [V_\lambda].
    \end{align*}
    Thus the multiplicity of~$V_\lambda$ in~$\F_n$ is~$\sum_{k=0}^{n}(-1)^k N_{\lambda,k}$.  If~$\lambda\neq 1^n$, then this multiplicity is precisely
    \[
        \sum_{i\text{ odd},\ i\leq n} (N_{\lambda,i-1}-N_{\lambda,i})
        =
        |\{\text{SYT of shape }\lambda\text{ with top entry of second column odd}\}|
    \]
    as required.  (The assumption~$\lambda\neq 1^n$ guarantees that~$N_{\lambda,n}=0$, so that a potential final term in the case of~$n$ even does not make a difference.)  If~$\lambda=1^n$ then~$N_{\lambda,i}=1$ for all~$i$, so that the multiplicity is~$0$ if~$n$ is odd and~$1$ if~$n$ is even, which agrees with the special convention outlined in the statement.  This completes the proof.
\end{proof}

\section{Jacobsthal numbers and the boundary maps of \texorpdfstring{$W(n)$}{the complex}}
\label{section-jacobsthal}

In this section we give a combinatorial description of the boundary maps in~$W(n)$ and relate them to the \emph{Jacobsthal numbers}. We start by recalling the boundary maps, and the Jacobsthal numbers.

Recall from Definition \ref{defn: W(n) and boundary maps} that for~$i\geq 0$ the boundary map has the following description
\begin{align*}
d^i\colon W(n)_i\to& W(n)_{i-1}\\
d^i\colon\tl_n\ootimes{n-i-1}\t\to&\tl_n\ootimes{n-i}\t\\
x\otimes r \mapsto& \sum_{j=0}^i (-1)^jd^i_j(x\otimes r)\\
  =& \sum_{j=0}^i(-1)^j(x\cdot s_{n-i+j-1}\cdots s_{n-i})\otimes\lambda^{-j}r\\
  =& \sum_{j=0}^i(-1)^j\lambda^{-j}(x\cdot (\lambda-\mu U_{n-i+j-1})\cdots (\lambda-\mu U_{n-i}))\otimes r.\\
\end{align*}
(Recall that the $U_i$ do not commute in general, so the `descending' ordering of the terms in the products is important.)
Here there are two possibilities for~$\lambda$ and~$\mu$,
namely~$(\lambda,\mu) = (-1,v)$ or $(\lambda,\mu) = (v^2,-v)$, and note for future reference that~$\frac{\mu}{\lambda} = -v$ and~$\frac{\mu}{\lambda}=-v^{-1}$ respectively.

The~$n$-th \emph{Jacobsthal number}~$J_n$~\cite{JacobsthalOEIS} is (among other things) the number of compositions of~$n$ that end with an odd number.  So for example, taking~$n=4$ the relevant compositions are~$31$,~$13$,~$211$,~$121$,~$1111$. 
The Jacobsthal number~$J_n$ can also be described as the number of sequences~$n>a_1>a_2>\cdots>a_r>0$ whose initial term has the opposite parity to~$n$. For the above examples, when~$n=4$, the relevant sequences are~$3$,~$1$,~$3>2$,~$3>1$ and~$3>2>1$. (We allow the empty sequence, and say that by convention its initial term is~$a_1=0$, and~$r=0$. Of course this only occurs when~$n$ is odd.)
The correspondence between compositions and sequences is as follows: Given a composition~$c_1c_2\cdots c_r$, the corresponding sequence is~$n>a_1>\cdots>a_{r-1}>0$ where~$a_j = n-(c_r+c_{r-1}+\cdots+c_{r-j+1})$.  Observe that the initial term is~$a_1=n-c_r$, so that since~$c_r$ is odd,~$a_1$ has the opposite parity to~$n$.

The Jacobsthal numbers are determined by the recursion~$J_n = J_{n-1}+2J_{n-2}$ for~$n\geq 2$, and also satisfy the closed form~$J_n = \frac{2^n-(-1)^n}{3}$.
Thus, the compositions and sequences counted by the Jacobsthal number are about one-third of the total possible sequences and compositions.

\begin{defn}
   Let~$a = v+v^{-1}$ where~$v\in R^\times$ is a unit. For every~$0\leq l\leq n$, we define the~$l^{th}$ \emph{Jacobsthal} element in~$\tl_n(a)$ as follows:
\[
    \calJ_l^n=\sum_{\substack{l>a_1>\cdots>a_r>0\\ l-a_1\text{ odd}}}(-1)^{(r-1)+l}
    \left(\frac{\mu}{\lambda}\right)^rU_{a_1+n-l}\cdots U_{a_r+n-l}
\]
    The indices of the~$U_j$ which occur vary from~$(n-(l-1))$ to~$(n-1)$ and hence are non-trivial in~$\tl_n(a)\otimes_{\tl_{n-(l-1)}(a)}\t$. Recall that we allow the empty sequence ($a_1=0$ and~$r=0$) when~$l$ is odd. This corresponds to a constant summand~$1$ in~$\calJ_l^n$ for odd~$l$. Note that the number of irreducible terms in~$\calJ_l^n$ is~$J_{l}$. {When~$l=n$, we have~$n-l=0$ and the formula simplifies}. We call~$\calJ^{n}_n$ the~\emph{Jacobsthal element}, and denote it~$\calJ_n$.
\end{defn}

\begin{proof}[Proof of Theorem~\ref{theorem-jacobsthal}]
    Firstly we note that the terms appearing in~$\calJ_{i+1}^n$ are non-zero in~$\tl_n\otimes_{\tl_{n-i}}\t${, which is} the target of~$d^i$. We consider the cases~$i$ odd and~$i$ even for clarity.
    For ease of notation, let~$p=n-i-1$. When~$i$ is odd, then~$d^i$ is a sum over an even number of terms, and acts by right multiplication on the left factor of~$x\otimes r$ by the following element:
    \begin{align*}
        & \sum_{j=0}^i (-1)^{j}\lambda^{-j} (\lambda-\mu U_{p+j})\cdots (\lambda-\mu U_{p+1})\\
        =& \sum_{j=0}^{(i-1)/2}\Big(\lambda^{-2j}(\lambda-\mu U_{p+2j})\cdots (\lambda-\mu U_{p+1}) \\
        &\qquad- \lambda^{-2j-1}(\lambda-\mu U_{p+(2j+1)})(\lambda-\mu U_{p+(2j+1)-1})\cdots (\lambda-\mu U_{p+1})\Big)\\
        =& \sum_{j=0}^{(i-1)/2} \Big( \big[\lambda^{-2j} (\lambda-\mu U_{p+2j})\cdots (\lambda-\mu U_{p+1}) \\
        &\qquad- \lambda^{-2j-1}(\lambda)(\lambda-\mu U_{p+(2j+1)-1})\cdots (\lambda-\mu U_{p+1}) \big]\\
        &\qquad \qquad+\lambda^{-2j-1} (\mu U_{p+(2j+1)})(\lambda-\mu U_{p+(2j+1)-1})\cdots (\lambda-\mu U_{p+1})\Big)\\
        =& \sum_{j=0}^{(i-1)/2} \lambda^{-2j-1} \mu U_{p+(2j+1)}(\lambda-\mu U_{p+(2j+1)-1})\cdots (\lambda-\mu U_{p+1})
    \end{align*}
     Here the final equality is given by noting that the terms in the square bracket cancel out. Substituting~$k=2j+1$ gives that~$d^i$ is multiplication by
     \[
         \sum_{\substack{0< k< i+1\\ k\text{ odd}}} \lambda^{-k} \mu U_{p+k}[(\lambda-\mu U_{p+(k-1)})\cdots (\lambda-\mu U_{p+1})]
     \]
     and multiplying out the terms in the square bracket above gives:
    \[
        \sum_{\substack{0< k< i+1\\ k\text{ odd}}} \lambda^{-k} \mu U_{p+k}\Big[\lambda^{k-1}+ \sum_{k>a_2>\ldots>
         a_r>0}\lambda^{k-1-r}(-1)^{r-1}\mu^{r-1} U_{p+a_2}\ldots U_{p+a_r}\Big].
    \]
    Let~$k=a_1$, and note that since~$i$ is odd, then~$k$ being odd equates to~$(i+1)-a_1$ being odd. Putting the two sums in the previous equation together corresponds to the sequences which enumerate the Jacobsthal compositions. Recall that~$p=n-i-1=n-(i+1)$ and since~$i$ is odd then multiplication by~$(-1)^{(i+1)}$ does not change the sign. It follows that~$d^i$ is right multiplication on the left of the tensor product by
    \[
        \calJ^n_{i+1}=\sum_{\substack{i+1>a_1>\cdots>a_r>0\\ (i+1)-a_1\text{ odd}}}(-1)^{(r-1)+(i+1)}\left(\frac{\mu}{\lambda}\right)^rU_{a_1+n-(i+1)}\cdots U_{a_r+n-(i+1)}.
    \]
    
    When~$i$ is even,~$d^i$ is a sum over an odd number of terms, and the element which we left multiply by can be written in a similar fashion to the odd case as follows (once again fixing~$p=n-i-1$).
     
    \begin{align*}
            & \sum_{j=0}^i(-1)^{j} \lambda^{-j}(\lambda-\mu U_{p+j})\cdots (\lambda-\mu U_{p+1})\\
            =& 1 + \sum_{j=1}^i(-1)^{j} \lambda^{-j}(\lambda-\mu U_{p+j})\cdots (\lambda-\mu U_{p+1})\\
            =& 1 - \sum_{j=1}^{i/2} \lambda^{-2j} \mu U_{p+2j}(\lambda-\mu U_{p+(2j-1)})\cdots (\lambda-\mu U_{p+1})).
    \end{align*}
    Substituting~$k=2j$ gives
    \[
        1 - \sum_{\substack{0<k<i+1\\ k\text{ even}}} \lambda^{-k} \mu U_{p+k}(\lambda-\mu U_{p+(k-1)})\cdots (\lambda-\mu U_{p+1}))
    \]
    and we use the computation for~$i$ odd to identify this with~$\calJ_{i+1}^n$. We note that setting~$k=a_1$ gives~$(i+1)-a_1$ odd, since both~$i$ and~$k$ are even, and the negative coefficient of the sum is a consequence of the factor~$(-1)^{(i+1)}$. The constant term~$1$ corresponds to the empty partition, for which we set~$r=0$ and~$a_1=0$.
\end{proof}

We now restrict ourselves to a study of the top differential in this setting. Recall when~$l=n$ we call~$\calJ^{n}_n$ the~\emph{Jacobsthal element}, and denote it~$\calJ_n$. It is a sum of~$J_{n}$ terms.

Since~$\F_n$ is the homology of~$W(n)$ in the top degree, it is simply the kernel of the top differential~$d^{n-1}\colon W(n)_{n-1}\to W(n)_{n-2}$.  There are identifications $W(n)_{n-1}=\tl_n(a)\otimes_{\tl_0(a)}\t\cong\tl_n(a)$
and $W(n)_{n-2}\cong \tl_n(a)\otimes_{\tl_1(a)}\t\cong \tl_n(a)$. 
\begin{prop}
    Under the above identifications, the top differential of~$W(n)$ is right-multiplication by~$\calJ_n$.  In particular, there is an exact sequence
    \[
        0\longrightarrow \F_n(a)
        \longrightarrow \tl_n(a)\xrightarrow{-\cdot\calJ_n}\tl_n(a).
    \]
\end{prop}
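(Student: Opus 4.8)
The plan is to obtain this Proposition as the special case $i=n-1$ of Theorem~\ref{theorem-jacobsthal}, reinterpreted through the two module identifications stated just above it, and then to read off the exact sequence from the elementary facts about top-degree homology recorded in Section~\ref{section-homology}.

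First I would pin down the identifications. Since $\tl_0(a)=\tl_1(a)=R$ and the trivial module $\t$ is just $R$ as an $R$-module, both
\[
    W(n)_{n-1}=\tl_n(a)\otimes_{\tl_0(a)}\t
    \qquad\text{and}\qquad
    W(n)_{n-2}=\tl_n(a)\otimes_{\tl_1(a)}\t
\]
are canonically isomorphic to $\tl_n(a)$ as left $\tl_n(a)$-modules, via $x\otimes r\mapsto rx$. I would also note, following the definition of $\calJ_l^n$, that the generators $U_j$ occurring in $\calJ_n=\calJ^n_n$ have indices in the range $1,\dots,n-1$, so that they remain nontrivial in the target $\tl_n(a)\otimes_{\tl_1(a)}\t$, consistently with the hypotheses of Theorem~\ref{theorem-jacobsthal}.

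Next, Theorem~\ref{theorem-jacobsthal} applied with $i=n-1$ gives $d^{n-1}(x\otimes r)=x\cdot\calJ^n_n\otimes r=x\cdot\calJ_n\otimes r$. Transporting this formula across the two identifications above turns $d^{n-1}$ into the map $\tl_n(a)\to\tl_n(a)$ sending $y\mapsto y\cdot\calJ_n$, i.e.~right multiplication by $\calJ_n$. The sign appearing in the displayed exact sequence is immaterial: since $-1$ is a unit of $R$, the kernel of right multiplication by $\calJ_n$ coincides with that of right multiplication by $-\calJ_n$, so one may write either.

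Finally, for the exact sequence I would invoke the discussion in Section~\ref{section-homology}: the complex $W(n)$ is concentrated in degrees $-1\leq i\leq n-1$, so $W(n)_n=0$, hence $\im(d^n)=0$ and $\F_n(a)=H_{n-1}(W(n))=\ker(d^{n-1})$. Combined with the previous paragraph this yields precisely
\[
    0\longrightarrow\F_n(a)\longrightarrow\tl_n(a)\xrightarrow{-\cdot\calJ_n}\tl_n(a).
\]
I do not expect a genuine obstacle here: the substantive computation is already contained in the proof of Theorem~\ref{theorem-jacobsthal}, and what remains is just the bookkeeping of the identifications $\tl_0=\tl_1=R$ and the observation that the indices occurring in $\calJ_n$ fall in the nontrivial range for the target module.
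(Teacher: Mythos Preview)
Your proof is correct and follows essentially the same route as the paper: specialise Theorem~\ref{theorem-jacobsthal} to $i=n-1$, transport across the identifications $\tl_n\otimes_{\tl_0}\t\cong\tl_n\cong\tl_n\otimes_{\tl_1}\t$, and read off $\F_n=\ker(d^{n-1})$. One small remark: the symbol ``$-$'' in $-\cdot\calJ_n$ is a placeholder for the argument (so the map is $y\mapsto y\cdot\calJ_n$), not a minus sign, so your paragraph about the sign being immaterial is unnecessary, though harmless.
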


\begin{proof}
    This is an application of Theorem~\ref{theorem-jacobsthal} for the case~$i=n-1$, which shows~$d^{n-1}(x\otimes r)=x\cdot \calJ_n \otimes r$. The identifications above send~$x\otimes r$ to~$x\cdot r$ and so under these the map~$d^{n-1}$ is left multiplication by~$\calJ_n$ as described.
\end{proof}

\bibliographystyle{alpha}
\bibliography{tl}
		
\end{document}